\newcommand{\T}{\mathcal{T}}
\newcommand{\Z}{\mathbb{Z}}
\numberwithin{equation}{section}
\newtheorem{theorem}{Theorem}[section]
\newtheorem{lemma}[theorem]{Lemma}
\begin{document}

\makeatletter
\def\imod#1{\allowbreak\mkern10mu({\operator@font mod}\,\,#1)}
\makeatother

\author{Alexander Berkovich}
   	\address{Department of Mathematics, University of Florida, 358 Little Hall, Gainesville FL 32611, USA}
   	\email{alexb@ufl.edu}

\author{Ali Kemal Uncu}
   \address{Research Institute for Symbolic Computation, Johannes Kepler University, Linz. Altenbergerstrasse 69
A-4040 Linz, Austria}
   \email{akuncu@risc.jku.at}

\thanks{Research of the first author is partly supported by the Simons Foundation, Award ID: 308929. Research of the second author is supported by the Austrian Science Fund FWF, SFB50-07 and SFB50-09 Projects.}

\title[\scalebox{.9}{Elementary Polynomial Identities Involving \MakeLowercase{$q$}-Trinomial Coefficients}]{Elementary Polynomial Identities Involving \MakeLowercase{$q$}-Trinomial Coefficients}
     
\begin{abstract} 
We use $q$-binomial theorem to prove three new polynomial identities involving $q$-trinomial coefficients. We then use summation formulas for the $q$-trinomial coefficients to convert our identities into another set of three polynomial identities, which imply Capparelli's partition theorems when the degree of the polynomial tends to infinity. This way we also obtain an interesting new result for the sum of the Capparelli's products. We finish this paper by proposing an infinite hierarchy of polynomial identities.
\end{abstract}

\keywords{Happy Birthday; Capparelli's Partition Theorems; $q$-Trinomial coefficients; $q$-Series; Polynomial Identities}
  
\subjclass[2010]{11B65, 11C08, 11P81, 11P82, 11P83, 11P84,  05A10, 05A15, 05A17}


\date{\today}
   
\dedicatory{To George E. Andrews, on the occasion of his 80th birthday}
   
\maketitle

\section{Introduction}

George Andrews is known for his many accomplishments and impeccable leadership in both his mathematical contributions and in service to the community of mathematics. His influence on research keeps opening new horizons, and at the same time, new doors to young researchers. There are many areas of study he introduced that are now saturated with world-class mathematicians, yet there are many more that the community is only catching up studying. Here, we look at one of these lesser-studied objects: $q$-trinomial coefficients. Introduced by Andrews in collaboration with Baxter, $q$-trinomial coefficients defined by 
\begin{align}\label{Round_Tri_Def}
\left(\begin{array}{c}L,\, b\\ a \end{array};q\right)_2 &:= \sum_{n\geq 0 } q^{n(n+b)} \frac{(q;q)_L}{(q;q)_n(q;q)_{n+a}(q;q)_{L-2n-a}},\\ \intertext{where, for any non-negative integer $n$, $(a,q)_n$ is the standard $q$-Pochhammer symbol \cite{Theory_of_Partitions},}
\nonumber(a;q)_n &:= (1-a)(1-aq)(1-aq^2)\dots(1-aq^{n-1}).
\end{align}

It is easy to verify that \[\sum_{a=-L}^L \left(\begin{array}{c}L,\, b\\ a \end{array};1\right)_2 t^a = \left(t+1+\frac{1}{t}\right)^L,\] 
which implies the generalized Pascal Triangle for \eqref{Round_Tri_Def} with $q=1$:
\[\begin{array}{ccccccccccc}
• &• & • & • & • & 1 & • & • & • & • \\ 
• &• & • & • & 1 & 1 & 1 & • & • & • \\ 
• &• & • & 1 & 2 & 3 & 2 & 1 & • & • \\ 
• &• & 1 & 3 & 6 & 7 & 6 & 3 & 1 & • \\
• & 1& 4 & 10& 16&19& 16&10&4& 1 & • \\ 
\reflectbox{$\ddots$} & \vdots& \vdots& \vdots & \vdots & \vdots & \vdots & \vdots & \vdots & \vdots & \ddots
\end{array} \]

The $q$-trinomial coefficients were studied in \cite{Andrews_P_Eulers,Andrews_P_Capparelli, Andrews_P_RR, Andrews_P_q_Tri, Andrews_Baxter, Andrews_Berkovich, Berkovich_McCoy_Pearce, Berkovich_McCoy_Orrick, BerkovichUncu7, Warnaar_Note, Warnaar, Warnaar_Refined, Warnaar_T}. Nevertheless, it appears that the following identities are new.

\begin{theorem}\label{Tri_Summation_Formulas_THM}
\begin{align}
\nonumber \sum_{n\geq 0} (-1)^n q^{(3n^2+n)/2}&\frac{(q^3;q^3)_L}{(q;q)_{L-2n}(q^3;q^3)_n}+ q^{2L+1}\sum_{n\geq 0} (-1)^n q^{(3n^2-n)/2} \frac{(q^3;q^3)_L}{(q;q)_{L-2n}(q^3;q^3)_n}\\ 
\label{First_Pair}&= \sum_{j=-L}^L \left\{q^{L+j+1}\left(\begin{array}{c}L,\, j+1\\ j \end{array};q^3\right)_2+ q^{L+4j}\left(\begin{array}{c}L,\, j\\ j-1 \end{array};q^3\right)_2\right\},\\
\label{Second_Pair} \sum_{n\geq 0} (-1)^n q^{(3n^2-n)/2}& \frac{(q^3;q^3)_L}{(q;q)_{L-2n}(q^3;q^3)_n}  =  \sum_{j=-L}^L  q^{2L-j}\left(\begin{array}{c}L,\, j-1\\ j \end{array};q^3\right)_2 ,\\
\label{Third_Pair} \sum_{n\geq 0} (-1)^n q^{(3n^2+n)/2}& \frac{(q^3;q^3)_L}{(q;q)_{L-2n}(q^3;q^3)_n}  =  \sum_{j=-L}^L   q^{L-j}\left(\begin{array}{c}L,\, j\\ j \end{array};q^3\right)_2 .
\end{align}
\end{theorem}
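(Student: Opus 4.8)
The plan is to prove \eqref{Second_Pair} and \eqref{Third_Pair} directly, working from the trinomial side, and then to obtain \eqref{First_Pair} from these two together with the elementary Pascal-type recurrence for the $q$-trinomial coefficients. Consider \eqref{Third_Pair}. Writing each $q^{3}$-trinomial as a product of two Gaussian binomials,
\[
\left(\begin{array}{c}L,\, b\\ a \end{array};q^{3}\right)_2=\sum_{m\ge 0}q^{3m(m+b)}\binom{L}{m}_{q^{3}}\binom{L-m}{m+a}_{q^{3}},
\]
which is just \eqref{Round_Tri_Def} with the Durfee-type multinomial split in two, interchanging the two summations, and substituting $i=m+j$, the right-hand side of \eqref{Third_Pair} becomes
\[
q^{L}\sum_{m\ge 0}q^{m}\binom{L}{m}_{q^{3}}\sum_{i\ge 0}q^{(3m-1)i}\binom{L-m}{i}_{q^{3}}.
\]
The task is now to fold this double sum back into the alternating single sum on the left, and this is the point at which the $q$-binomial theorem does the work. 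Since the trinomials are built on the base $q^{3}$ while the target sum carries the factor $(q;q)_{L-2n}$ on the base $q$, I would first pass between the two bases via the cubic splitting $1-q^{3r}=(1-q^{r})(1+q^{r}+q^{2r})$ (equivalently, by inserting primitive cube roots of unity), and then apply the $q$-binomial theorem in its terminating form $\sum_{k\ge 0}(-z)^{k}q^{\binom{k}{2}}\binom{N}{k}_{q}=(z;q)_{N}$ to one of the two indices. The sign $(-1)^{n}$ and the quadratic exponent $\tfrac{3n^{2}+n}{2}$ are exactly what such an application produces; \eqref{Second_Pair} comes out of the same computation, with $q^{L-j}$ replaced by $q^{2L-j}$ and the middle entry of the trinomial lowered by one.

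For \eqref{First_Pair} I would first note that its left-hand side is literally ``left-hand side of \eqref{Third_Pair}'' $+\,q^{2L+1}\cdot$``left-hand side of \eqref{Second_Pair}'', so that, granting \eqref{Second_Pair} and \eqref{Third_Pair}, it becomes equivalent to a pure identity among weighted sums of $q^{3}$-trinomial coefficients; this last I would settle using the Pascal-type three-term recurrence of the trinomials together with a reindexing of the summation variable $j$. An entirely mechanical alternative for all three identities is to observe that both sides of each are polynomials in $q$, so it is enough to check that the two sides satisfy the same linear recurrence in $L$ and agree at $L=0,1$: the recurrence on the trinomial side comes from the Pascal recurrence, and on the sum side from the obvious factorisations of $(q^{3};q^{3})_{L}$ and of $\binom{L}{n}_{q^{3}}$.

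The step I expect to be the genuine obstacle is the collapsing of the double sum. Agreement in the limit is easy: as $L\to\infty$ the left-hand sides tend, by Euler's identity $\sum_{n\ge 0}(-w)^{n}q^{3\binom{n}{2}}/(q^{3};q^{3})_{n}=(w;q^{3})_{\infty}$, to $1/(q^{2};q^{3})_{\infty}$ for \eqref{Second_Pair} and to $1/(q;q^{3})_{\infty}$ for \eqref{Third_Pair}. Turning this into the exact polynomial identity, however, forces one to choose the reindexing and the specialisation in the $q$-binomial theorem precisely enough that the base-$q$ and base-$q^{3}$ Pochhammers recombine into $(q^{3};q^{3})_{L}\big/\bigl((q;q)_{L-2n}(q^{3};q^{3})_{n}\bigr)$ with no stray factors; this bookkeeping, rather than any conceptual difficulty, is where the care is needed.
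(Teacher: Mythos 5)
Your rewriting of the right-hand side of \eqref{Third_Pair} is correct as far as it goes: with the trinomial written as $\sum_{m\ge0}q^{3m(m+j)}{L\brack m}_{q^3}{L-m\brack m+j}_{q^3}$ and $i=m+j$, one indeed gets $q^{L}\sum_{m\ge0}q^{m}{L\brack m}_{q^3}\sum_{i\ge0}q^{(3m-1)i}{L-m\brack i}_{q^3}$. But the step you yourself flag as ``the genuine obstacle'' is exactly where the proposal has a real gap, and the tool you invoke cannot close it in the form stated: after your reindexing neither summation index carries a quadratic exponent, so the terminating $q$-binomial theorem $\sum_{k}(-z)^{k}q^{k(k-1)/2}{N\brack k}_{q}=(z;q)_{N}$ does not apply to either sum; the inner sum $\sum_{i}z^{i}{N\brack i}_{q^3}$ is a Rogers--Szeg\H{o}-type polynomial which does not factor. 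The appeal to the cubic splitting $1-q^{3r}=(1-q^{r})(1+q^{r}+q^{2r})$ (or cube roots of unity) is only a gesture: you never exhibit how the sign $(-1)^{n}$, the exponents $(3n^{2}\pm n)/2$, and the mixed-base factor $(q^{3};q^{3})_{L}/\bigl((q;q)_{L-2n}(q^{3};q^{3})_{n}\bigr)$ are actually produced, and the asserted ``same computation'' for \eqref{Second_Pair} inherits the same gap. The fallback (same linear recurrence in $L$ plus initial values) is likewise a program rather than a proof: no recurrence is derived for either side, and the Pascal-type trinomial recurrences mix different values of the upper parameter, so matching recurrences is itself a nontrivial task. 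Finally, your reduction of \eqref{First_Pair} to \eqref{Second_Pair} and \eqref{Third_Pair} plus a residual identity among weighted $q^{3}$-trinomials is a correct observation, but that residual identity is again asserted, not proven. The limit check as $L\to\infty$ only confirms consistency and of course does not establish the polynomial identities.

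For contrast, the paper sidesteps the collapse of the finite double sum entirely: it multiplies both sides by $t^{L}/(q^{3};q^{3})_{L}$ and sums over $L$, so that the trinomial sides are evaluated in closed form by the trivariate generating function of Lemma~\ref{Round_Tri_tri_var_GF_THM} (itself three applications of the nonterminating $q$-binomial theorem \eqref{qBin_EQN}), while the left-hand sides factor after the substitution $\nu=L-2n$ via \eqref{qBin_EQN} and \eqref{qExonential_sum}; equality of the resulting infinite products, coefficientwise in $t$, gives all three identities at once, \eqref{First_Pair} included, with no trinomial recurrences needed. If you want to salvage your finite approach, the missing ingredient is precisely some identity converting base-$q^{3}$ binomial data into the mixed-base quantity $(q^{3};q^{3})_{L}/\bigl((q;q)_{L-2n}(q^{3};q^{3})_{n}\bigr)$ with the alternating quadratic weight; as it stands, that key mechanism is absent.
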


For $|q|<1$, it is not obvious how the right-hand sides of \eqref{First_Pair}-\eqref{Third_Pair} behave as $L$ tends to $\infty$. However, from the left-hand sides we can easily discover the limiting formulas \[\frac{1}{(q;q^3)_\infty},\, \frac{1}{(q^2;q^3)_\infty},\text{  and  }\frac{1}{(q;q^3)_\infty}, \] respectively, as $L\rightarrow\infty$ with the aid of the $q$-binomial theorem.

These identities are related to combinatorics and partition theory. As an example, we will show that \eqref{First_Pair} and \eqref{Third_Pair} in Theorem~\ref{Tri_Summation_Formulas_THM} imply Capparelli's partition theorems. Moreover, the same application will yield the following new interesting result.

\begin{theorem}\label{Thm_Outlook2}
\begin{equation}\label{EQN_Outlook2}
\sum_{m,n\geq 0} \frac{q^{2m^2 + 6mn +6n^2 -2m-3n}}{(q;q)_m(q^3;q^3)_n} = (-q^2,-q^4;q^6)_\infty(-q^3;q^3)_\infty  + (-q,-q^5;q^6)_\infty(-q^3;q^3)_\infty .
\end{equation}
\end{theorem}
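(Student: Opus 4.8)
The plan is to derive \eqref{EQN_Outlook2} by taking the limit $L\to\infty$ in the pair of identities \eqref{First_Pair} and \eqref{Third_Pair} from Theorem~\ref{Tri_Summation_Formulas_THM}, after first recognizing that the left-hand sides of those identities encode the generating function on the left of \eqref{EQN_Outlook2}. Concretely, I would start from the observation already made in the excerpt that the left-hand side of \eqref{Third_Pair} tends to $1/(q;q^3)_\infty$ as $L\to\infty$; more importantly, I want the \emph{finite} sums on those left-hand sides. Writing $S_L^{\pm}:=\sum_{n\geq 0}(-1)^n q^{(3n^2\pm n)/2}\,(q^3;q^3)_L/\big((q;q)_{L-2n}(q^3;q^3)_n\big)$, the left side of \eqref{First_Pair} is $S_L^{+}+q^{2L+1}S_L^{-}$ and the left side of \eqref{Third_Pair} is $S_L^{+}$. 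The combinatorial content is that these polynomials, in a suitable normalization, count partitions whose generating function is the Capparelli-type double sum; passing $L\to\infty$ in $S_L^{+}$ and in $S_L^{-}$ (via the $q$-binomial theorem, exactly as indicated after Theorem~\ref{Tri_Summation_Formulas_THM}) should produce precisely $1/(q;q^3)_\infty$ and $1/(q^2;q^3)_\infty$ respectively, while the $q^{2L+1}$ factor kills the second term.

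Next I would turn to the right-hand sides. The key technical input is a limiting formula for $q$-trinomial coefficients: as $L\to\infty$, the normalized $q$-trinomial coefficients $\left(\begin{array}{c}L,\,b\\a\end{array};q\right)_2$ converge (after extracting the appropriate power of $q$) to theta-type or to $1/(q;q)_\infty$-type expressions. This is the standard asymptotic analysis of Andrews--Baxter $q$-trinomials; I would invoke the known result that
\[
\lim_{L\to\infty}\left(\begin{array}{c}L,\,b\\a\end{array};q\right)_2=\frac{1}{(q;q)_\infty},
\]
and more refined versions when there is an $L$-dependent power of $q$ multiplying the trinomial, in which case one gets a bilateral theta series by the Jacobi triple product. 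Applying this to the sums $\sum_{j=-L}^{L}$ on the right of \eqref{First_Pair} and \eqref{Third_Pair}, the $q^{L\pm j}$, $q^{L+4j}$ weights force convergence of the bilateral sums to explicit products; combining the limits of \eqref{First_Pair} and \eqref{Third_Pair} — adding or subtracting them appropriately to isolate the symmetric and antisymmetric parts in $j$ — should collapse the right-hand sides into $(-q^2,-q^4;q^6)_\infty(-q^3;q^3)_\infty+(-q,-q^5;q^6)_\infty(-q^3;q^3)_\infty$ via the Jacobi triple product identity in base $q^3$ and $q^6$.

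The remaining ingredient is identifying the left-hand double sum $\sum_{m,n\geq 0} q^{2m^2+6mn+6n^2-2m-3n}/\big((q;q)_m(q^3;q^3)_n\big)$ with the $L\to\infty$ limit of (a combination of) $S_L^{+}$ and $S_L^{-}$. Here I would use the $q$-binomial theorem in the form $\sum_{k\geq 0} q^{k^2}z^k/(q;q)_k = \ldots$ applied twice — once in base $q$ to handle the $m$-sum and once in base $q^3$ to handle the $n$-sum — noting that the quadratic form $2m^2+6mn+6n^2-2m-3n$ splits as $2(m+\tfrac{3n}{2})^2 + \tfrac{3n^2}{2} - 2m - 3n$ up to completing the square, which after the substitution $N=m+?\,n$ or a Durfee-square style rearrangement matches the exponent $(3n^2\pm n)/2$ appearing in $S_L^{\pm}$. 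The main obstacle I anticipate is precisely this bookkeeping step: matching the two-variable quadratic form and the $(q;q)_m(q^3;q^3)_n$ denominator against the single-sum $S_L^{\pm}$ limits, getting every power of $q$ and every sign $(-1)^n$ to line up, and making sure that the ``symmetric plus antisymmetric in $j$'' combination on the trinomial side corresponds exactly to the ``$q^2$-version plus $q$-version'' on the product side — in other words, verifying that no stray factor of $q$, no off-by-one in the residue class mod $3$, and no interchange-of-limits issue spoils the identification. Everything else (the $q$-binomial theorem, the Jacobi triple product, the trinomial asymptotics) is standard; the care is entirely in the exponent arithmetic and in justifying the $L\to\infty$ passage on the trinomial side, where one should appeal to the dominated-convergence-style argument that is routine for these Andrews--Baxter polynomials.
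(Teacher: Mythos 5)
Your plan does not reach Theorem~\ref{Thm_Outlook2}, because it omits the two steps that actually generate the double sum, and it relies on a limiting argument that fails. First, the left side of \eqref{EQN_Outlook2} is \emph{not} the $L\to\infty$ limit of the sums $S_L^{+}$, $S_L^{-}$ appearing in \eqref{First_Pair} and \eqref{Third_Pair}: those limits are the single products $1/(q;q^3)_\infty$ and $1/(q^2;q^3)_\infty$ (via the $q$-binomial theorem), which are neither the Capparelli products nor anything carrying the binary quadratic form $Q(m,n)-2m-3n=2m^2+6mn+6n^2-2m-3n$. No completion of squares can turn the alternating single sums $S_L^{\pm}$, with exponent $(3n^2\pm n)/2$ and sign $(-1)^n$, into that positive double sum. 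In the paper the quadratic form arises only after (i) dualization $q\mapsto 1/q$, multiplying by $q^{3L^2/2}$ and using \eqref{qPoch_q_to_one_over_q} and \eqref{T_n_Def}, which converts Theorem~\ref{Tri_Summation_Formulas_THM} into Theorem~\ref{Tri_Summation_DUAL_Formulas_THM}, and (ii) a trinomial Bailey-type summation (Warnaar's \eqref{T1_BL}, packaged as \eqref{F_1_sum}) which trades the $T_1$ trinomials for $q$-binomials and introduces a second bound $M$. Moreover, \eqref{EQN_Outlook2} is derived from the \emph{middle} identity \eqref{Second_Pair} — through its dual \eqref{Second_pair_dual} and the resulting polynomial identity \eqref{THM72} — not from \eqref{First_Pair} and \eqref{Third_Pair}, which instead lead to \eqref{KR1} and \eqref{Cap2}. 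Identifying the left side of \eqref{EQN_Outlook2} with any recombination of the limits of \eqref{First_Pair} and \eqref{Third_Pair} is itself a nontrivial identity that your outline does not supply.

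Second, the trinomial asymptotics you invoke is exactly the step the paper warns against: because of the weights $q^{L+j+1}$, $q^{L+4j}$, $q^{L-j}$ on the right-hand sides of \eqref{First_Pair}--\eqref{Third_Pair}, each fixed-$j$ term tends to $0$ (the trinomial tends to $1/(q^3;q^3)_\infty$ while $q^{L\pm j}\to 0$), so a termwise or dominated-convergence argument with $j$ fixed gives the wrong answer; the true limit comes from $j$ growing with $L$, and the paper states explicitly that the behaviour of these right-hand sides as $L\to\infty$ is not obvious. This is precisely why the proof passes to the dual $T_\nu$ trinomials, applies \eqref{F_1_sum} to \eqref{Second_pair_dual} to obtain the doubly bounded identity \eqref{THM72}, and only then lets $M\to\infty$, where \eqref{Binom_limit2} and the Jacobi triple product \eqref{JTP} legitimately evaluate the right-hand side as $(-q^2,-q^4;q^6)_\infty(-q^3;q^3)_\infty+(-q,-q^5;q^6)_\infty(-q^3;q^3)_\infty$. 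To repair your argument you would need to replace the direct $L\to\infty$ passage by this dualization-plus-Bailey route (or an equivalent mechanism producing \eqref{THM72}).
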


In Section~\ref{Sec_Background}, we give a comprehensive list of definitions and identities that will be used in this paper. Section~\ref{Sec_Tri} has the proof of Theorem~\ref{Tri_Summation_Formulas_THM}. This section also includes the dual of these identities and a necessary version of the Bailey lemma for the $q$-trinomial coefficients. We find new polynomial identities that yield Capparelli's partition theorem in Section~\ref{Sec_Cap}. Theorem~\ref{Thm_Outlook2}, which includes the sum of two the Capparelli's theorem's products, is also proven in Section~\ref{Sec_Cap}. This section also contains a comparison of the mentioned polynomial identities and the previously found polynomial identities \cite{BerkovichUncu7} that also imply the Capparelli's partition theorems. Outlook section, Section~\ref{Section_Outlook}, includes two new results the authors are planning on presenting soon: a doubly bounded identity involving Warnaar's refinement of the $q$-trinomial coefficients, and also an infinite hierarchy of $q$-series identities.

\section{Necessary Definitions and Identities}\label{Sec_Background}

We use the standard notations as in \cite{Theory_of_Partitions}. For formal variables $a_i$ and $q$, and a non-negative integer $N$ 
\begin{align}(a;q)_\infty&:= \lim_{n\rightarrow\infty}(a;q)_n,\\
(a_1,a_2,\dots,a_k;q)_n &:= (a_1;q)_n(a_2;q)_n\dots (a_k;q)_n\text{ for any}n\in\Z_{\geq0}\cup\{\infty\}.\\
\intertext{We can extend the definition of $q$-Pochhamer symbols to negative $n$ with}
\label{Shifting_of_infinite_Products} (a;q)_n &= \frac{(a;q)_\infty}{(aq^n;q)_\infty}.\\
\intertext{Observe that \eqref{Shifting_of_infinite_Products} implies}
\label{One_over_neg_k} \frac{1}{(q;q)_n} &= 0\text{  if  }n <0.\\
\intertext{Also observe that, for non-negative $n$ we have}
\label{qPoch_q_to_one_over_q}\displaystyle (q^{-1};q^{-1})_n &= (-1)^n q^{n+1\choose 2} (q;q)_n .
\end{align}
We define the $q$-binomial coefficients in the classical manner as
	\begin{align}
 \label{Binom_def}
  \displaystyle {m+n \brack m}_q &:= \left\lbrace \begin{array}{ll}\frac{(q)_{m+n}}{(q)_m(q)_{n}},&\text{for }m, n \geq 0,\\
   0,&\text{otherwise.}\end{array}\right.
\intertext{It is well known that for $m\in\mathbb{Z}_{\geq0}$}
 \label{Binom_limit}
 \displaystyle \lim_{N\rightarrow\infty}{N\brack m}_q &= \frac{1}{(q;q)_m},
 \intertext{for any $j\in \mathbb{Z}_{\geq0}$ and $\nu =0$ or 1}
  \label{Binom_limit2} \displaystyle \lim_{M\rightarrow\infty}{2M+\nu\brack M-j}_q &= \frac{1}{(q;q)_\infty}.
\end{align}

We define another $q$-trinomial coefficient for any integer $n$:
\begin{align}
\label{T_n_Def} T_n\left( \begin{array}{c}L\\a\end{array};q\right) &:= q^{(L(L-n)-a(a-n))/2} \left(\begin{array}{c}L,\, a-n\\ a \end{array};\frac{1}{q}\right)_2.
\end{align}

\begin{theorem}[q-Binomial Theorem]\label{qBin_THM} For variables $a,\, q,$ and $z$,
\begin{equation}\label{qBin_EQN}\sum_{k\geq 0} \frac{(a;q)_n}{(q;q)_n}z^n = \frac{(az;q)_\infty}{(z;q)_\infty}.\end{equation}
\end{theorem}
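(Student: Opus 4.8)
The plan is to prove Theorem~\ref{qBin_THM} by the classical functional-equation argument. Let $f(z) := \sum_{n\geq 0} \frac{(a;q)_n}{(q;q)_n} z^n$, viewed as a formal power series in $z$ (convergent for $|z|<1$ when $|q|<1$). First I would record the two elementary recurrences for the coefficients: writing $c_n := (a;q)_n/(q;q)_n$, one has $(1-q^n)c_n = (1-aq^{n-1})c_{n-1}$, which rearranges to $c_n - q^n c_n = c_{n-1} - aq^{n-1}c_{n-1}$. Summing $z^n$ against this identity over $n\geq 1$ and reindexing the shifted sums gives the functional equation
\begin{equation*}
f(z) - f(qz) = z f(z) - az f(qz), \qquad\text{i.e.}\qquad (1-z) f(z) = (1-az) f(qz).
\end{equation*}

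Next I would iterate this functional equation: from $(1-z)f(z) = (1-az)f(qz)$ we get, for every $N\geq 1$,
\begin{equation*}
f(z) = \frac{(az;q)_N}{(z;q)_N}\, f(q^N z).
\end{equation*}
Then I would let $N\to\infty$. Since $f$ is continuous at $0$ with $f(0)=1$ (the $n=0$ term) and $q^N z\to 0$, we have $f(q^N z)\to 1$, while $(az;q)_N\to (az;q)_\infty$ and $(z;q)_N\to (z;q)_\infty$ by definition. This yields $f(z) = (az;q)_\infty/(z;q)_\infty$, which is the claimed identity \eqref{qBin_EQN}.

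The main technical point to handle carefully is the interchange of summation and the passage to the limit $N\to\infty$: one must justify that $f$ defines an analytic function on $|z|<1$ (so that rearranging the series to extract the functional equation is legitimate) and that $f(q^N z)\to f(0)=1$. Both are routine for $|q|<1$: the ratio test shows the series for $f$ converges for $|z|<1$, uniform convergence on compact subsets gives continuity, and $|q^N z|\to 0$. As an alternative, purely formal route that sidesteps analytic estimates, one can instead verify directly that the right-hand side $g(z) := (az;q)_\infty/(z;q)_\infty$ satisfies the same functional equation $(1-z)g(z) = (1-az)g(qz)$ and the same normalization $g(0)=1$, and then show by comparing coefficients of $z^n$ that the functional equation plus normalization determines the power series uniquely; this identifies $f=g$. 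Either way, the argument is short, and no step presents a genuine obstacle beyond the standard convergence bookkeeping. (For completeness one should also note the harmless typo in \eqref{qBin_EQN}, where the summation index should read $n\geq 0$ and the power $z^n$ matches the coefficient $(a;q)_n/(q;q)_n$.)
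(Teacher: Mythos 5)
Your argument is correct: the coefficient recurrence $(1-q^n)c_n=(1-aq^{n-1})c_{n-1}$, the resulting functional equation $(1-z)f(z)=(1-az)f(qz)$, its iteration, and the limit $N\to\infty$ (or the formal uniqueness argument) constitute the standard proof of the $q$-binomial theorem, and you correctly flag the typo $k$ versus $n$ in the summation index. Note that the paper itself offers no proof to compare against --- it states Theorem~\ref{qBin_THM} as a classical result (from Andrews' \emph{Theory of Partitions}) and only uses it as a tool, so your functional-equation derivation simply supplies the usual textbook justification.
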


Also note that the $q$-exponential sum \begin{equation}
\label{qExonential_sum} \sum_{k\geq 0} \frac{q^{n(n-1)/2}}{(q;q)_n}z^n = (-z;q)_\infty
\end{equation} is a limiting case ($a\rightarrow \infty$ after the variable change $z\mapsto -z/a$) of \eqref{qBin_EQN}.

Another ingredient we will use here is the Jacobi Triple Product Identity \cite{Theory_of_Partitions}
\begin{theorem}[Jacobi Triple Product Identity]
\begin{equation}\label{JTP}
\sum_{j=-\infty}^\infty z^j q^{j^2} = (q^2,-zq,-\frac{q}{z};q^2)_\infty.
\end{equation}
\end{theorem}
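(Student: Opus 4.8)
The plan is to deduce the series--product identity \eqref{JTP} from the $q$-exponential sum \eqref{qExonential_sum}, which the paper has just recorded, together with a Durfee-square evaluation. I work for $|q|<1$ and $z\in\mathbb{C}^{*}$, where all three infinite products on the right converge, and treat $q$ as fixed. First I would expand the two $z$-dependent factors of the product. Replacing $q$ by $q^{2}$ in \eqref{qExonential_sum} turns the exponent $n(n-1)/2$ into $n(n-1)$; taking $z\mapsto zq$ then contributes an extra $q^{n}$, and the two combine to give
\[
(-zq;q^{2})_\infty=\sum_{n\geq0}\frac{q^{n^{2}}}{(q^{2};q^{2})_{n}}\,z^{n},
\]
while the substitution $z\mapsto q/z$ yields the companion expansion
\[
\left(-\tfrac{q}{z};q^{2}\right)_\infty=\sum_{m\geq0}\frac{q^{m^{2}}}{(q^{2};q^{2})_{m}}\,z^{-m}.
\]

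Next I would multiply these two series and read off the coefficient of $z^{j}$. Setting $n-m=j$ and invoking the convention \eqref{One_over_neg_k} to let $m$ range over all of $\mathbb{Z}_{\geq0}$, the elementary identity $(m+j)^{2}+m^{2}=2m(m+j)+j^{2}$ lets me pull out $q^{j^{2}}$, so that the coefficient of $z^{j}$ in $(-zq;q^{2})_\infty(-q/z;q^{2})_\infty$ equals $q^{j^{2}}\sum_{m\geq0}q^{2m(m+j)}/\big((q^{2};q^{2})_{m+j}(q^{2};q^{2})_{m}\big)$. Restoring the remaining factor $(q^{2};q^{2})_\infty$ from the product side, the full identity \eqref{JTP} reduces to the single claim that, for every $j\in\mathbb{Z}$,
\[
\sum_{m\geq0}\frac{q^{2m(m+j)}}{(q^{2};q^{2})_{m+j}\,(q^{2};q^{2})_{m}}=\frac{1}{(q^{2};q^{2})_\infty}.
\]

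I expect this residual identity, which is the Durfee-rectangle identity in base $q^{2}$, to be the main obstacle; since its summand is symmetric under $j\mapsto-j$ (reindex $m\mapsto m-j$) I may assume $j\geq0$. I would prove it combinatorially: taking $m$ maximal with $\lambda_{m}\geq m+j$, a partition $\lambda$ decomposes uniquely into a filled $m\times(m+j)$ rectangle (weight $q^{2m(m+j)}$), the portion to its right, which is a partition into at most $m$ parts, and the portion below it, which is a partition into parts at most $m+j$; these last two account for the factors $1/(q^{2};q^{2})_{m}$ and $1/(q^{2};q^{2})_{m+j}$, and summing over $m$ reconstructs every partition exactly once, giving $1/(q^{2};q^{2})_\infty$. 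Alternatively one can expand $1/(x;q^{2})_\infty=\sum_{k\geq0}x^{k}/(q^{2};q^{2})_{k}$, the $a=0$ case of the $q$-binomial theorem \eqref{qBin_EQN}, and extract a coefficient. Once this is in hand, summing the coefficients of $z^{j}$ over all $j\in\mathbb{Z}$ produces $\sum_{j}q^{j^{2}}z^{j}$, matching the left-hand side of \eqref{JTP}. A functional-equation variant is also available: writing $G(z):=(q^{2},-zq,-q/z;q^{2})_\infty$, one checks $G(q^{2}z)=(qz)^{-1}G(z)$ by shifting indices in the two $z$-dependent products, which forces the Laurent coefficients to satisfy $c_{j}=q^{2j-1}c_{j-1}$ and hence $c_{j}=q^{j^{2}}c_{0}$; but pinning down the constant term $c_{0}=1$ runs into exactly the same Durfee-type evaluation, so I would favor the direct route above.
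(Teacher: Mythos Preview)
Your proof is correct and follows a standard route (essentially the Cauchy--Gauss argument via the $q$-exponential and the Durfee-rectangle decomposition). However, the paper does not give its own proof of \eqref{JTP}: the Jacobi Triple Product is merely stated in Section~\ref{Sec_Background} as a classical ingredient, with a reference to \cite{Theory_of_Partitions}. So there is nothing to compare against; your argument supplies a self-contained proof where the paper simply quotes the result.
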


\section{Proof of Theorem~\ref{Tri_Summation_Formulas_THM} and some $q$-Trinomial Summation formulas}\label{Sec_Tri}

We start with the following lemma.

\begin{lemma}\label{Round_Tri_tri_var_GF_THM} For any integer $n$, we have 
\begin{equation}\label{Round_Tri_tri_var_GF_EQN}
\sum_{L\geq 0}\sum_{j=-\infty}^\infty  \frac{x^j t^L }{(q;q)_L} \left(\begin{array}{c}L,\, j-n\\ j \end{array};q\right)_2 = \frac{(t^2 q^{-n} ; q)_\infty}{(t, \frac{t}{x} q^{-n},tx;q)_\infty}.
\end{equation}
\end{lemma}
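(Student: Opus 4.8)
\emph{Proof idea.} The plan is to expand the round $q$-trinomial coefficient on the left via its definition~\eqref{Round_Tri_Def}, cancel the common factor $(q;q)_L$, and then reindex the resulting triple sum so that it splits into three independent geometric-type sums, each summable by the $q$-binomial theorem (Theorem~\ref{qBin_THM}).

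Concretely, with $b=j-n$ and $a=j$ in \eqref{Round_Tri_Def} (and calling the inner summation index $m$),
\[
\left(\begin{array}{c}L,\, j-n\\ j \end{array};q\right)_2 = \sum_{m\geq 0} q^{m(m+j-n)}\,\frac{(q;q)_L}{(q;q)_m\,(q;q)_{m+j}\,(q;q)_{L-2m-j}},
\]
so after multiplying by $x^j t^L/(q;q)_L$ and summing over $L\geq 0$ and $j\in\Z$, the factor $(q;q)_L$ cancels and the left-hand side of \eqref{Round_Tri_tri_var_GF_EQN} becomes
\[
\sum_{L,m\geq 0}\;\sum_{j\in\Z}\;\frac{x^j\, t^L\, q^{m(m+j-n)}}{(q;q)_m\,(q;q)_{m+j}\,(q;q)_{L-2m-j}}.
\]
By \eqref{One_over_neg_k} the summand vanishes unless $m+j\geq 0$ and $L-2m-j\geq 0$, so I would substitute $i:=m+j\geq 0$ and $k:=L-2m-j\geq 0$, i.e. $j=i-m$ and $L=k+m+i$. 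A brief computation of the exponents, using $m(m+j-n)=mi-mn$ and $t^L x^j=t^k\,(t/x)^m\,(tx)^i$, converts the sum, with $(i,k,m)$ now ranging freely over $\Z_{\geq 0}^3$, into
\[
\sum_{i,k,m\geq 0}\frac{\big((t/x)q^{-n}\big)^m}{(q;q)_m}\cdot\frac{(tx\,q^m)^i}{(q;q)_i}\cdot\frac{t^k}{(q;q)_k}.
\]

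I would then perform the three sums from the inside out. The $k$-sum and the $i$-sum are instances of Euler's identity $\sum_{r\geq 0} z^r/(q;q)_r = 1/(z;q)_\infty$ (the $a=0$ case of \eqref{qBin_EQN}), giving $1/(t;q)_\infty$ and $1/(tx\,q^m;q)_\infty$ respectively; rewriting $1/(tx\,q^m;q)_\infty=(tx;q)_m/(tx;q)_\infty$ via \eqref{Shifting_of_infinite_Products}, the leftover $m$-sum is
\[
\frac{1}{(t;q)_\infty\,(tx;q)_\infty}\sum_{m\geq 0}\frac{(tx;q)_m}{(q;q)_m}\big(\tfrac{t}{x}q^{-n}\big)^m
=\frac{1}{(t;q)_\infty\,(tx;q)_\infty}\cdot\frac{(t^2q^{-n};q)_\infty}{\big(\tfrac{t}{x}q^{-n};q\big)_\infty},
\]
the last equality being the $q$-binomial theorem with $a=tx$ and $z=\tfrac{t}{x}q^{-n}$. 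Multiplying the three factors together gives precisely the right-hand side of \eqref{Round_Tri_tri_var_GF_EQN}.

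There is no deep obstacle here: the identity is really a repackaging of three applications of the $q$-binomial theorem. The only point requiring care is the bookkeeping — justifying the interchange of the (absolutely convergent, for $|q|<1$ and $t$, $x$ suitably small, or else formal) summations, tracking the exponents of $t$ and $q$ through the substitution $j=i-m$, $L=k+m+i$, and checking that after this substitution $i$, $k$, $m$ really do run over all of $\Z_{\geq 0}$ with no residual constraint.
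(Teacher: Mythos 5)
Your proposal is correct and follows essentially the same route as the paper's own proof: expand via \eqref{Round_Tri_Def}, cancel $(q;q)_L$, untangle the triple sum with the substitution $i=m+j$, $k=L-2m-j$ (the paper's $\nu$, $\mu$), apply Euler's $a=0$ case of \eqref{qBin_EQN} to the $k$- and $i$-sums, and finish with the full $q$-binomial theorem with $(a,z)=(tx,\tfrac{t}{x}q^{-n})$. Your bookkeeping is accurate (indeed, your stated value of $z$ fixes a small sign typo in the exponent of $q$ in the paper's parenthetical choice of $(a,z)$), so nothing further is needed.
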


The $n=0$ case of \eqref{Round_Tri_tri_var_GF_EQN} first appeared in Andrews \cite[p.153, (6.6)]{Andrews_P_Capparelli}.

\begin{proof}
We start by writing the definition \eqref{Round_Tri_Def} on left-hand side of the formula \eqref{Round_Tri_tri_var_GF_EQN}. After a simple cancellation one sees that the triple sum can be untangled by the change of the summation variables $\nu = k+j$, and $\mu=L-2k-j$. This change of summation variables, keeping \eqref{One_over_neg_k} in mind, shows that the left-hand side sum of \eqref{Round_Tri_tri_var_GF_EQN} can be written as: \begin{equation}\label{triple sum}
\sum_{k\geq 0} \frac{x^{-k}t^k q^{-nk}}{(q;q)_k}\sum_{\nu\geq 0} \frac{x^\nu t^\nu q^{k\nu}}{(q;q)_\nu} \sum_{\mu\geq 0} \frac{t^{\mu}}{(q;q)_\mu}.
\end{equation} One can apply the $q$-Binomial Theorem starting from the innermost sum of \eqref{triple sum}. After applying the $q$-Binomial Theorem~\ref{qBin_THM} with $(a,z)= (0,t)$, and $(a,z)=(0,xtq^k)$ we get 
\begin{equation}\label{triple_sum_2}
\frac{1}{(t;q)_\infty}\sum_{k\geq 0} \frac{x^{-k}t^k q^{-nk}}{(q;q)_k (xtq^k;q)_\infty}.
\end{equation} We rewrite $(xtq^k;q)_\infty$ using \eqref{Shifting_of_infinite_Products}, take the $k$-free portion out of the summation, and use \eqref{qBin_EQN} once again with $(a,z) = (xt,x^{-1}tq^n)$ to finish the proof.
\end{proof}

We can prove Theorem~\ref{Tri_Summation_Formulas_THM} using Lemma~\ref{Round_Tri_tri_var_GF_THM}.

\begin{proof}[Proof of Theorem~\ref{Tri_Summation_Formulas_THM}] Instead of proving these identities directly, we will prove the equality of their generating functions. It is clear that one can prove the equality of the two sides of polynomial identities of the form \[A_L(q) = B_L(q)\] by a multi-variable generating function equivalence \begin{equation}\label{Bi_variate}\sum_{L\geq 0} \frac{t^L}{(q^3;q^3)_L} A_{L}(q) = \sum_{L\geq 0} \frac{t^L}{(q^3;q^3)_L} B_{L}(q). \end{equation}

On the right-hand side of \eqref{Bi_variate} with the choice of $B_{L}(q)$ being the right-hand sides of \eqref{First_Pair}-\eqref{Third_Pair}, we get \begin{equation}\label{summed_three_products}\frac{(t^2q^2;q^3)_\infty}{(t;q)_\infty}\frac{(1+q)}{(1+tq)},\text{  } \frac{(t^2q;q^3)_\infty}{(t;q)_\infty},\text{  and  }\frac{(t^2q^2;q^3)_\infty}{(t;q)_\infty},\end{equation} respectively, by Lemma~\ref{Round_Tri_tri_var_GF_THM}. Hence, all we need to do is to show that the left-hand side of \eqref{Bi_variate} with the choice of $A_L(q)$ being the left-hand sides of \eqref{First_Pair}-\eqref{Third_Pair} yields the same products.

The left-hand side of \eqref{First_Pair} has two sums. The first sum of the left-hand side of \eqref{First_Pair} after being multiplied by $t^L / (q^3;q^3)_L$, summing over $L$ as suggested in \eqref{Bi_variate}, and after simple cancellations turns into \begin{equation} \label{First_pair_sum1_raw}
\sum_{L,n\geq 0} (-1)^n \frac{q^{\frac{3n^2+n}{2}}t^L}{(q;q)_{L-2n}(q^3;q^3)n}.
\end{equation} We introduce the new summation variable $\nu = L-2n$. This factors the double sum fully. Keeping \eqref{One_over_neg_k} in mind, we rewrite \eqref{First_pair_sum1_raw} as \[\sum_{\nu\geq 0} \frac{t^\nu}{(q;q)_\nu}\sum_{n\geq 0}\frac{q^{3n(n-1)/2}}{(q^3;q^3)_n}(-t^2q)^n.\] Then, using \eqref{qBin_EQN} and \eqref{qExonential_sum} on the two sums, respectively, we see that \eqref{First_pair_sum1_raw} is equal to \[\frac{(t^2q;q^3)_\infty}{(t;q)_\infty}.\]
The same exact calculation can be done for the second sum on the left-hand side of \eqref{First_Pair}, and the left-hand side sums of \eqref{Second_Pair} and \eqref{Third_Pair}. After the simplifications we see that the products we get from the left-hand side sums after \eqref{Bi_variate} is applied to them, are the same as the products \eqref{summed_three_products}. 
\end{proof}

In the identities of Theorem~\ref{Tri_Summation_Formulas_THM}, we replace $q\mapsto 1/q$, multiply both sides of the equations by $q^{3L^2/2}$, use \eqref{qPoch_q_to_one_over_q} and \eqref{T_n_Def}, and do elementary simplifications to get the following theorem.

\begin{theorem}\label{Tri_Summation_DUAL_Formulas_THM}
\begin{align}
\nonumber \sum_{n\geq 0} q^{{L-2n\choose 2}} &\frac{(q^3;q^3)_L}{(q;q)_{L-2n}(q^3;q^3)_n} + q^{L+1}\sum_{n\geq 0} q^{{L-2n+1 \choose 2}+n} \frac{(q^3;q^3)_L}{(q;q)_{L-2n}(q^3;q^3)_n}\\
\label{First_pair_dual} &\hspace{1cm}= \sum_{j\geq 0} q^{\frac{3j^2+j}{2}}\left\{T_{-1}\left(\begin{array}{c}L\\ j \end{array};q^3 \right)  +T_{-1}\left(\begin{array}{c}L\\ j+1 \end{array};q^3 \right) \right\},\\
\label{Second_pair_dual} \sum_{n\geq 0} q^{L-2n\choose 2} &\frac{(q^3;q^3)_L}{(q;q)_{L-2n}(q^3;q^3)_n} = \sum_{j\geq 0} q^{\frac{3j^2-j}{2}} T_{1}\left(\begin{array}{c}L\\ j \end{array};q^3 \right),\\
\label{Third_pair_dual}\sum_{n\geq 0} q^{\frac{(L-2n)^2}{2}} &\frac{(q^3;q^3)_L}{(q;q)_{L-2n}(q^3;q^3)_n} = \sum_{j\geq 0} q^{\frac{3j^2+2j}{2}} T_{0}\left(\begin{array}{c}L\\ j \end{array};q^3 \right).
\end{align}
\end{theorem}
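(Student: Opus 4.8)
The plan is to obtain these identities as a mechanical consequence of Theorem~\ref{Tri_Summation_Formulas_THM} via the substitution $q \mapsto 1/q$, exactly as the paragraph preceding the statement indicates. Concretely, I would start from each of \eqref{First_Pair}, \eqref{Second_Pair}, \eqref{Third_Pair}, replace $q$ by $q^{-1}$ throughout, multiply both sides by $q^{3L^2/2}$, and then rewrite every object that appears in terms of positive powers of $q$. For the left-hand sides this is routine: the factor $(q^3;q^3)_L$ becomes $(q^{-3};q^{-3})_L$, which by \eqref{qPoch_q_to_one_over_q} (applied with $q \mapsto q^3$) equals $(-1)^L q^{3\binom{L+1}{2}}(q^3;q^3)_L$; similarly $(q;q)_{L-2n}$ and $(q^3;q^3)_n$ in the denominator each pick up an explicit monomial and a sign. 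Collecting the sign $(-1)^L$ from the numerator against $(-1)^{L-2n} = (-1)^L$ from $(q;q)_{L-2n}$ and $(-1)^n$ from $(q^3;q^3)_n$ leaves a residual $(-1)^n$, which cancels the $(-1)^n$ already present on the left-hand sides of \eqref{First_Pair} and \eqref{Second_Pair} and \eqref{Third_Pair}. The remaining bookkeeping is to verify that the total power of $q$ — namely $3L^2/2$ from the prefactor, plus $3\binom{L+1}{2}$ from the numerator, minus $\binom{L-2n+1}{2}$ and $3\binom{n+1}{2}$ from the denominator, minus the original exponents $(3n^2 \pm n)/2$ — collapses to $\binom{L-2n}{2}$, $\binom{L-2n}{2}$, and $(L-2n)^2/2$ respectively (with the extra $+n$ and $q^{L+1}$ shifts in the first identity). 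These are elementary quadratic identities in $L$ and $n$ that I would check directly.

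For the right-hand sides the key is definition \eqref{T_n_Def}: since $T_n\!\left(\begin{array}{c}L\\a\end{array};q\right) = q^{(L(L-n)-a(a-n))/2}\left(\begin{array}{c}L,\,a-n\\a\end{array};\tfrac{1}{q}\right)_2$, a round $q$-trinomial coefficient evaluated at $1/q$ and multiplied by an appropriate power of $q$ is exactly a $T$-coefficient. So in \eqref{First_Pair}, after $q \mapsto 1/q$ the term $q^{-3(L+j+1)}\left(\begin{array}{c}L,\,j+1\\j\end{array};q^{-3}\right)_2$ must be matched, using \eqref{T_n_Def} with $q \mapsto q^3$, $a = j$, $n = -1$ (so that $a-n = j+1$), against $q^{(3(L(L+1)-j(j+1))/2)}\cdot(\text{stuff})\cdot T_{-1}\!\left(\begin{array}{c}L\\j\end{array};q^3\right)$; multiplying the whole identity by $q^{3L^2/2}$ is precisely what converts the leftover $L$-dependent exponent into the clean prefactor absorbed on the left, leaving the advertised $q^{(3j^2+j)/2}$ weight out front on the right. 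The same computation with $n=-1$, $a=j+1$ handles the second $T_{-1}$ term; with $n=+1$, $a=j$ it handles \eqref{Second_pair_dual}; and with $n=0$, $a=j$ it handles \eqref{Third_pair_dual}. One also has to note that the summation range $j=-L,\dots,L$ on the right of the original identities becomes $j\geq 0$ here, which is legitimate because the $q$-trinomial coefficient (hence $T_n$) vanishes outside $|a|\leq L$ and, after the reflection $j \mapsto -j$ or a direct symmetry argument, the negative-$j$ terms either vanish or fold into the $j\geq 0$ range; I would spell out which symmetry is being used for each of the three identities.

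The main obstacle I anticipate is purely organizational rather than conceptual: keeping the several simultaneous monomial shifts (the global $q^{3L^2/2}$, the $\binom{\cdot}{2}$ exponents spat out by \eqref{qPoch_q_to_one_over_q}, and the $(L(L-n)-a(a-n))/2$ built into \eqref{T_n_Def}) consistent so that everything $L$-dependent cancels and one is left with exactly $q^{(3j^2+j)/2}$, $q^{(3j^2-j)/2}$, $q^{(3j^2+2j)/2}$ as claimed. I would organize the verification by treating one identity of the three in full detail and then remarking that the other two follow by the identical substitution with the parameter $n$ in \eqref{T_n_Def} changed from $-1$ to $+1$ and to $0$. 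No new analytic input is needed; Theorem~\ref{Tri_Summation_Formulas_THM}, \eqref{qPoch_q_to_one_over_q}, and \eqref{T_n_Def} suffice.
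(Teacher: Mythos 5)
Your plan is exactly the paper's own proof: the paper proves Theorem~\ref{Tri_Summation_DUAL_Formulas_THM} by precisely the recipe you describe ($q\mapsto 1/q$ in Theorem~\ref{Tri_Summation_Formulas_THM}, multiply by $q^{3L^2/2}$, use \eqref{qPoch_q_to_one_over_q} and \eqref{T_n_Def}, simplify), so there is no divergence of method, and your exponent bookkeeping is the right kind of computation. Three small warnings before you carry it out. First, \eqref{qPoch_q_to_one_over_q} as printed has the wrong sign in the exponent: the correct statement is $(q^{-1};q^{-1})_n=(-1)^n q^{-{n+1\choose 2}}(q;q)_n$, and with the printed $+{n+1\choose 2}$ nothing $L$-dependent will cancel. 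Second, multiplying by $q^{3L^2/2}$ alone does not finish the job: both sides are still carrying a common monomial ($q^{-L}$ for \eqref{Third_pair_dual}, $q^{-L/2}$ for \eqref{Second_pair_dual}, $q^{-5L/2-1}$ for \eqref{First_pair_dual}) which must be divided out; this is what the paper's ``elementary simplifications'' hides. Third, the term $q^{L+4j}\left(\begin{array}{c}L,\,j\\ j-1\end{array};q^3\right)_2$ corresponds in \eqref{T_n_Def} to $n=-1$, $a=j-1$, so matching it to $T_{-1}\left(\begin{array}{c}L\\ j+1\end{array};q^3\right)$ requires the reindexing $j\mapsto j+2$, not ``$a=j+1$'' directly.

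The one genuine gap is your treatment of the summation range, and it cannot be repaired the way you suggest. The substitution $q\mapsto 1/q$ turns the bilateral sums $\sum_{j=-L}^{L}$ into bilateral sums; the negative-$j$ terms neither vanish nor fold into the displayed $\sum_{j\ge 0}$. Indeed $T_{\nu}\left(\begin{array}{c}L\\ -a\end{array};q\right)=T_{\nu}\left(\begin{array}{c}L\\ a\end{array};q\right)$, so the $j<0$ terms survive and carry the \emph{reflected} exponents (e.g.\ $q^{(3j^2-2j)/2}$ in \eqref{Third_pair_dual}), which are absent from the printed right-hand sides: for $L=1$ the left side of \eqref{Third_pair_dual} is $q^{1/2}+q^{3/2}+q^{5/2}$, while $\sum_{j\ge0}q^{(3j^2+2j)/2}T_0\left(\begin{array}{c}1\\ j\end{array};q^3\right)=q^{3/2}+q^{5/2}$; the missing $q^{1/2}$ is exactly the $j=-1$ term, since $T_0\left(\begin{array}{c}1\\ -1\end{array};q^3\right)=1\neq 0$. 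What the dualization actually proves is the theorem with $\sum_{j=-L}^{L}$ (equivalently $j\in\mathbb{Z}$) on the right, which is also the form the paper uses downstream in \eqref{Capparelli_eqn1_raw}, \eqref{THM72} and \eqref{THM71_2_also_Cap2}; the ``$j\ge0$'' in the statement should be treated as a misprint, and your write-up should state and prove the bilateral version rather than try to justify the unilateral one. A related caution for \eqref{First_pair_dual}: if you dualize \eqref{First_Pair} exactly as printed, the computation will not close (a stray power of $q$ appears on the $q^{L+4j}$ family, and in fact the two sides of \eqref{First_Pair} as printed already disagree at $L=1$, where the left side is $1+q+\cdots+q^5$ and the right side is $q^2+q^3+2q^4+q^5$), even though the bilateral form of \eqref{First_pair_dual} itself is correct at $L=1$; so expect to repair a misprint in the source identity before the term-by-term matching works.
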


Building on the development in \cite{Andrews_Berkovich}, \cite{Berkovich_McCoy_Pearce}, and \cite{Warnaar_Note}, Warnaar \cite[eqns. (10),(14)]{Warnaar} proved the following summation formulas.
\begin{theorem}[Warnaar]
\begin{align}
\label{T0_BL} &\sum_{i\geq 0} q^{\frac{i^2}{2}} {L\brack i}_q T_0\left(\begin{array}{c}i\\a\end{array};q\right) = q^{\frac{a^2}{2}} {2L\brack L-a}_q,\\
\label{T1_BL}&\sum_{i\geq 0} q^{{i\choose 2}} (1+q^L){L\brack i}_q T_1\left(\begin{array}{c}i\\ a \end{array};q \right) = (1+q^a) q^{{a \choose 2}} {2L \brack L-a}_q.
\end{align}
\end{theorem}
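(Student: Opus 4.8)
The plan is to unfold $T_0$ and $T_1$ into their defining single sums and then reduce everything to the $q$-Chu--Vandermonde summation. Starting from \eqref{Round_Tri_Def} and \eqref{T_n_Def} and using the $q\mapsto 1/q$ reflection ${m\brack r}_{1/q}=q^{-r(m-r)}{m\brack r}_q$, a short exponent computation --- in which the power of $q$ collapses, respectively, to a perfect square and to a binomial coefficient --- yields the ``Gaussian'' representations
\begin{align*}
T_0\left(\begin{array}{c}L\\ a\end{array};q\right)&=\sum_{n\geq0}q^{\frac{(L-2n-a)^2}{2}}\,\frac{(q;q)_L}{(q;q)_n\,(q;q)_{n+a}\,(q;q)_{L-2n-a}},\\
T_1\left(\begin{array}{c}L\\ a\end{array};q\right)&=\sum_{n\geq0}q^{\binom{L-2n-a}{2}}\,\frac{(q;q)_L}{(q;q)_n\,(q;q)_{n+a}\,(q;q)_{L-2n-a}}.
\end{align*}

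I would then substitute these into the left-hand side of \eqref{T0_BL} (resp.\ \eqref{T1_BL}), interchange the $i$- and $n$-summations, and use the elementary product identity ${L\brack i}_q{i\brack n}_q{i-n\brack n+a}_q={L\brack n}_q{L-n\brack n+a}_q{L-2n-a\brack i-2n-a}_q$ to pull all of the $n$-dependence out of the inner sum over $i$. After the shift $i\mapsto j+2n+a$ and the change of variable $p=n+j$ (making $p$ the new outer and $n$ the new inner index), the exponent of $q$ in the summand simplifies to $n(n+a)+p(p+a)+a^2/2$ in case \eqref{T0_BL} and to $n(n+a)+p(p+a-1)+\binom{a}{2}$ in case \eqref{T1_BL}, while the triple product of $q$-binomials regroups as ${L\brack p}_q{p\brack n}_q{L-p\brack n+a}_q$. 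The inner sum over $n$ then becomes, after reversing $n\mapsto p-n$, a textbook instance of $q$-Chu--Vandermonde and collapses to ${L\brack p+a}_q$. For \eqref{T0_BL} the surviving outer sum $q^{a^2/2}\sum_p q^{p(p+a)}{L\brack p}_q{L\brack p+a}_q$ is again $q$-Chu--Vandermonde, equal to $q^{a^2/2}{2L\brack L-a}_q$, which finishes that identity. For \eqref{T1_BL} one is left with $(1+q^L)\,q^{\binom{a}{2}}\sum_p q^{p(p+a-1)}{L\brack p}_q{L\brack p+a}_q$, so it remains to prove the auxiliary identity
\[(1+q^L)\sum_{p}q^{p(p+a-1)}{L\brack p}_q{L\brack p+a}_q=(1+q^a)\,{2L\brack L-a}_q,\]
which I would establish by induction on $L$ via the $q$-Pascal recurrence, or by two further applications of $q$-Chu--Vandermonde after a relabelling.

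I expect this last auxiliary $q$-binomial identity to be the main obstacle. Unlike the $T_0$ case, the exponent $q^{p(p+a-1)}$ is ``off by one'' from the Vandermonde-friendly $q^{p(p+a)}$, so the two weights $1+q^L$ and $1+q^a$ have to be used to absorb the discrepancy; doing this correctly --- while simultaneously tracking the half-integer powers $q^{i^2/2}$ and $q^{a^2/2}$ through the interchange and Vandermonde steps --- is the delicate point. A cleaner but more citation-heavy alternative would be to prove \eqref{T0_BL} and \eqref{T1_BL} together by induction on $L$, combining the Andrews--Baxter \cite{Andrews_Baxter} contiguous relations that express $T_0,T_1$ at level $L$ through $T_0,T_1$ at level $L-1$ with the three-term recurrence ${2L\brack L-a}_q={2(L-1)\brack (L-1)-(a-1)}_q+(q^{L+a-1}+q^{L+a}){2(L-1)\brack (L-1)-a}_q+q^{2L+2a}{2(L-1)\brack (L-1)-(a+1)}_q$ for the central $q$-binomial coefficient (obtained by applying $q$-Pascal twice); there the only difficulty is in quoting the trinomial recurrences precisely.
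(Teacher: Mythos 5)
The paper does not actually prove this theorem: \eqref{T0_BL} and \eqref{T1_BL} are quoted from Warnaar \cite[eqns.~(10),(14)]{Warnaar} (which in turn builds on \cite{Andrews_Berkovich,Berkovich_McCoy_Pearce,Warnaar_Note}), so your self-contained derivation is necessarily a different route, and for the most part it is a correct one. Your ``Gaussian'' representations of $T_0$ and $T_1$ are right (they follow from \eqref{Round_Tri_Def} and \eqref{T_n_Def} exactly as you describe, and are in fact the classical Andrews--Baxter forms), the regrouping identity ${L\brack i}_q{i\brack n}_q{i-n\brack n+a}_q={L\brack n}_q{L-n\brack n+a}_q{L-2n-a\brack i-2n-a}_q$ is valid, the exponent computations after $i\mapsto j+2n+a$, $p=n+j$ check out, and both applications of $q$-Chu--Vandermonde in the $T_0$ case are legitimate; so your proof of \eqref{T0_BL} is complete.

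For \eqref{T1_BL}, however, what you have is a correct \emph{reduction}, not a proof: everything hinges on the auxiliary identity
\begin{equation*}
(1+q^L)\sum_{p}q^{p(p+a-1)}{L\brack p}_q{L\brack p+a}_q=(1+q^a)\,{2L\brack L-a}_q,
\end{equation*}
which you explicitly leave unproven, and this is the one genuine gap. The identity is true, and it can be closed along the lines you suggest; one concrete way: since $q^{p(p+a-1)}=q^{{p\choose 2}+{p+a\choose 2}-{a\choose 2}}$, the left sum (without the $1+q^L$) is $q^{-{a\choose 2}}[x^a]\,(-x;q)_L(-x^{-1};q)_L$, while your $T_0$ Vandermonde step says $q^{-{a\choose 2}}[x^a]\,(-x;q)_L(-qx^{-1};q)_L={2L\brack L-a}_q$. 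The elementary relation $(x+q^L)(-x;q)_L(-x^{-1};q)_L=(x+1)(-x;q)_L(-qx^{-1};q)_L$ then gives a two-term recurrence in $a$ which, together with vanishing for $a>L$ and the contiguous relation $(1-q^{L-a}){2L\brack L-a}_q=(1-q^{L+a+1}){2L\brack L-a-1}_q$, forces exactly the claimed $(1+q^a)/(1+q^L)$ proportionality; alternatively your proposed induction on $L$ via $q$-Pascal also works. Until that lemma is written out, \eqref{T1_BL} is not yet established by your argument.
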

We found a new similar summation formula:
\begin{theorem}
\begin{equation}
\label{T_min_1_BL} \sum_{i\geq 0} q^{{i+1 \choose 2}} {L\brack i}_q \left\{ T_{-1}\left(\begin{array}{c}i\\ a \end{array};q \right) + T_{-1}\left(\begin{array}{c}i\\ a+1 \end{array};q \right)  \right\} = q^{a+1\choose 2} {2L+1\brack L-a}_q.
\end{equation}
\end{theorem}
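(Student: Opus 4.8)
The plan is to establish \eqref{T_min_1_BL} by turning its left-hand side into a first-order recurrence in $a$ that can be solved against a trivial boundary condition, invoking only Warnaar's \eqref{T1_BL} (notably, \eqref{T0_BL} is not needed). Write $S_L(a)$ for the left-hand side of \eqref{T_min_1_BL}. First I would record two elementary identities, both provable directly from \eqref{Round_Tri_Def} and \eqref{T_n_Def}: the ``contiguous relation''
\[
T_{-1}\left(\begin{array}{c}L\\a\end{array};q\right)+T_{-1}\left(\begin{array}{c}L\\a+1\end{array};q\right)=T_{1}\left(\begin{array}{c}L\\a\end{array};q\right)+q^{(L+a+1)/2}\,T_{0}\left(\begin{array}{c}L\\a+1\end{array};q\right),
\]
and the three-term $q$-trinomial Pascal recurrence that follows by peeling off the last factor in \eqref{Round_Tri_Def},
\[
T_{1}\left(\begin{array}{c}L\\a\end{array};q\right)=T_{-1}\left(\begin{array}{c}L-1\\a-1\end{array};q\right)+T_{-1}\left(\begin{array}{c}L-1\\a\end{array};q\right)+q^{(L+a)/2}\,T_{0}\left(\begin{array}{c}L-1\\a+1\end{array};q\right).
\]

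Granting these, the computation is short. Using the contiguous relation termwise in $S_L(a)$ gives
\[
S_L(a)=\sum_{i\ge 0}q^{\binom{i+1}{2}}{L\brack i}_q T_1\left(\begin{array}{c}i\\a\end{array};q\right)+q^{a/2}\sum_{i\ge 0}q^{(i+1)^2/2}{L\brack i}_q T_0\left(\begin{array}{c}i\\a+1\end{array};q\right).
\]
In the first sum I would replace $q^i{L\brack i}_q$ by ${L+1\brack i}_q-{L\brack i-1}_q$ (legitimate since $\binom{i+1}{2}=\binom{i}{2}+i$) and re-index the ${L\brack i-1}_q$-piece by $i\mapsto i+1$; applying the Pascal recurrence to $T_1\left(\begin{array}{c}i+1\\a\end{array};q\right)$ shows that this re-indexed piece equals $S_L(a-1)$ plus exactly the $T_0$-sum already present. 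The two $T_0$-sums cancel, leaving
\[
S_L(a)+S_L(a-1)=\sum_{i\ge 0}q^{\binom{i}{2}}{L+1\brack i}_q T_1\left(\begin{array}{c}i\\a\end{array};q\right)=\frac{(1+q^a)\,q^{\binom{a}{2}}}{1+q^{L+1}}{2L+2\brack L+1-a}_q,
\]
the last step being \eqref{T1_BL} with $L$ replaced by $L+1$.

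To finish, observe that $S_L(a)=0$ for $a>L$, because ${L\brack i}_q=0$ for $i>L$ and $T_{-1}\left(\begin{array}{c}i\\a\end{array};q\right)=0$ for $a>i$. The displayed recurrence then determines $S_L(a)$ for all $a$ by downward induction, so it suffices to check that $q^{\binom{a+1}{2}}{2L+1\brack L-a}_q$ satisfies the same recurrence and the same vanishing. After clearing $q^{\binom{a}{2}}$ and $1+q^{L+1}$, the recurrence reduces to
\[
(1+q^{L+1})\left(q^{a}{2L+1\brack L-a}_q+{2L+1\brack L-a+1}_q\right)=(1+q^{a}){2L+2\brack L+1-a}_q,
\]
which is a two-line consequence of the $q$-Pascal rule applied to ${2L+2\brack L+1-a}_q$. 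Hence $S_L(a)=q^{\binom{a+1}{2}}{2L+1\brack L-a}_q$, which is \eqref{T_min_1_BL}.

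The step I expect to be the real obstacle is the contiguous relation: it is not among the standard Andrews--Baxter recurrences, so its proof is where a sign or a $q$-power is easiest to misplace. I would verify it by expanding each $T_{\pm1}$ and $T_0$ via \eqref{T_n_Def}, inverting $q\mapsto 1/q$ with \eqref{qPoch_q_to_one_over_q}, and matching the two resulting word generating functions term by term; alternatively one can try to deduce it from the last-letter recurrence together with the $a\mapsto-a$ symmetry of the $q$-trinomial coefficients. A secondary nuisance is that $T_0$ carries half-integer powers of $q$ on the wrong-parity sublattice, so the exponents $(L+a+1)/2$, $(L+a)/2$ and $(i+1)^2/2$ must be handled honestly even though $S_L(a)$ and the final product are genuine polynomials in $q$.
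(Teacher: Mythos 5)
Your proposal is correct, and it takes a genuinely different route from the paper. The paper's proof is a one-pass evaluation: it quotes the Berkovich--McCoy--Orrick relation \eqref{T_min_1_Transform}, absorbs the factor $1/(1-q^{i+1})$ into the binomial via \eqref{qBinom_shift}, re-indexes, and then applies \emph{both} Warnaar summations \eqref{T0_BL} and \eqref{T1_BL} at level $L+1$. You instead derive the first-order recurrence $S_L(a)+S_L(a-1)=(1+q^a)\,q^{a\choose 2}{2L+2\brack L+1-a}_q/(1+q^{L+1})$ using only \eqref{T1_BL} together with your two contiguous/Pascal-type trinomial relations, and solve it by downward induction from the trivial vanishing for $a>L$; your closing binomial identity does follow in two lines from the two forms of the $q$-Pascal rule, and your cancellation of the two $T_0$-sums is correct as written. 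The step you flag as the real risk is in fact fine: after clearing the prefactors in \eqref{T_n_Def} and replacing $q\mapsto 1/q$, your contiguous relation is equivalent to
\[
\left(\begin{array}{c}L,\, a+1\\ a \end{array};q\right)_2+q^{a+1}\left(\begin{array}{c}L,\, a+2\\ a+1 \end{array};q\right)_2=q^{L-a}\left(\begin{array}{c}L,\, a-1\\ a \end{array};q\right)_2+\left(\begin{array}{c}L,\, a+1\\ a+1 \end{array};q\right)_2,
\]
which holds termwise in \eqref{Round_Tri_Def} because $(1-q^m)/(q;q)_m=1/(q;q)_{m-1}$ (with \eqref{One_over_neg_k} handling the boundary), and your three-term recurrence for $T_1$ likewise follows termwise from the splitting $1-q^L=(1-q^n)+q^n(1-q^{n+a})+q^{2n+a}(1-q^{L-2n-a})$ applied to $(q;q)_L=(q;q)_{L-1}(1-q^L)$. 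So the trade-off is: the paper's argument is shorter but leans on the external identity \eqref{T_min_1_Transform} and on \eqref{T0_BL}, while yours is self-contained modulo two elementary lemmas you must prove (your contiguous relation being essentially a same-level cousin of \eqref{T_min_1_Transform}, which lifts to level $L+1$ instead) plus an extra induction in $a$.
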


\begin{proof} To prove \eqref{T_min_1_BL}, we need the following identity of Berkovich--McCoy--Orrick \cite[p. 815, (4.8)]{Berkovich_McCoy_Orrick}:
\begin{equation}\label{T_min_1_Transform}
T_{-1}\left(\begin{array}{c}L\\ a \end{array};q \right) + T_{-1}\left(\begin{array}{c}L\\ a+1 \end{array};q \right) = \frac{1}{1-q^{L+1}}\left\{T_{1}\left(\begin{array}{c}L+1\\ a \end{array};q \right) -q^{(L+1-a)/2} T_{0}\left(\begin{array}{c}L+1\\ a \end{array};q \right) \right\}.
\end{equation}
After the use of \eqref{T_min_1_Transform} on the left-hand side of \eqref{T_min_1_BL}, we employ \begin{equation}\label{qBinom_shift}\frac{1}{1-q^{i+1}}{L\brack i}_q = \frac{1}{1-q^{L+1}}{L+1\brack i+1}_q,\end{equation} and summations \eqref{T0_BL} and \eqref{T1_BL}. This yields the right-hand side of \eqref{T_min_1_BL} after some elementary simplifications.
\end{proof}

\begin{theorem} Let $F_i(L)$ and $\alpha_i(a)$ be sequences, depending on $L$ and $a$, respectively, for $i=-1,\, 0$ or $1$. If
 \begin{align} 
 \label{F_0_def} F_0(L) &= \sum_{a=-\infty}^\infty \alpha_0(a) T_0\left(\begin{array}{c}L\\ a \end{array};q \right), \\
 \label{F_1_def} F_1(L) &= \sum_{a=-\infty}^\infty \alpha_1(a) T_1\left(\begin{array}{c}L\\ a \end{array};q \right), \\
 \label{F_-1_def} F_{-1}(L) &= \sum_{a=-\infty}^\infty \alpha_{-1}(a) \left\{T_{-1}\left(\begin{array}{c}L\\ a \end{array};q \right)+T_{-1}\left(\begin{array}{c}L\\ a+1 \end{array};q \right)\right\},
\intertext{then}
\label{F_0_sum}\sum_{i\geq 0} q^{\frac{i^2}{2}} &{L \brack i}_q F_0(i) = \sum_{a=-\infty}^\infty \alpha_0(a) q^{\frac{a^2}{2}} {2L\brack L-a}_q,\\
\label{F_1_sum}(1+q^L)\sum_{i\geq 0} q^{i\choose 2} &{L \brack i}_q F_1(i) = \sum_{a=-\infty}^\infty \alpha_1(a)(1+q^a) q^{a \choose 2} {2L\brack L-a}_q,\\
\label{F_-1_sum}\sum_{i\geq 0} q^{i+1 \choose 2} &{L \brack i}_q F_{-1}(i) = \sum_{a=-\infty}^\infty \alpha_{-1}(a) q^{i+1\choose 2} {2L+1 \brack L-a}_q
\end{align} hold.
\end{theorem}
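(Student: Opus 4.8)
The plan is to obtain all three identities \eqref{F_0_sum}--\eqref{F_-1_sum} by one and the same linearity argument: substitute the defining expansions \eqref{F_0_def}--\eqref{F_-1_def} into the left-hand sides, exchange the order of the two summations, and then collapse the inner sum over $i$ by the matching ``seed'' summation formula --- \eqref{T0_BL} for \eqref{F_0_sum}, \eqref{T1_BL} for \eqref{F_1_sum}, and the new formula \eqref{T_min_1_BL} for \eqref{F_-1_sum}. In essence the whole theorem is the observation that each of \eqref{T0_BL}, \eqref{T1_BL}, \eqref{T_min_1_BL} is linear in its ``$a$''-parameter, and therefore extends by superposition from a single $T$-coefficient to an arbitrary admissible sequence $\alpha_i(a)$.

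Concretely, for \eqref{F_0_sum} I would insert \eqref{F_0_def} and write $\sum_{i\ge 0}q^{i^2/2}{L\brack i}_q F_0(i)=\sum_{a}\alpha_0(a)\sum_{i\ge 0}q^{i^2/2}{L\brack i}_q T_0\!\left(\begin{array}{c}i\\ a\end{array};q\right)$, after which \eqref{T0_BL} turns the inner sum into $q^{a^2/2}{2L\brack L-a}_q$ and reproduces the right-hand side. For \eqref{F_1_sum} the same manipulation is carried out with the global factor $(1+q^L)$ pushed inside the $i$-sum so that \eqref{T1_BL} applies verbatim; the only bookkeeping is that the $q^L$ on the left re-emerges as a $q^a$ on the right, yielding $\sum_a\alpha_1(a)(1+q^a)q^{\binom a2}{2L\brack L-a}_q$. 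For \eqref{F_-1_sum} the point I would emphasize is structural: $F_{-1}$ is \emph{defined} in \eqref{F_-1_def} through the symmetric combination $T_{-1}\!\left(\begin{array}{c}L\\ a\end{array};q\right)+T_{-1}\!\left(\begin{array}{c}L\\ a+1\end{array};q\right)$ precisely because that is the combination appearing on the left of \eqref{T_min_1_BL}; substituting and exchanging the sums then lets \eqref{T_min_1_BL} act with parameter $a$, producing $\sum_a\alpha_{-1}(a)q^{\binom{a+1}{2}}{2L+1\brack L-a}_q$, which is the right-hand side of \eqref{F_-1_sum} (the exponent there should read $\binom{a+1}{2}$, not $\binom{i+1}{2}$).

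I do not expect a genuine obstacle. The interchange of the double summation deserves one sentence of justification, but it is automatic: for fixed $L$ the $q$-binomial ${L\brack i}_q$ restricts $i$ to $0\le i\le L$, and for such $i$ the coefficients $T_n\!\left(\begin{array}{c}i\\ a\end{array};q\right)$ --- and likewise ${2L\brack L-a}_q$ and ${2L+1\brack L-a}_q$ on the right --- vanish outside a range of $a$ bounded in terms of $L$, so each $F_i(i)$ is a finite sum and the rearrangement involves only finitely many terms. Hence the argument is purely formal in $q$, and all the content lives in the three summation formulas \eqref{T0_BL}, \eqref{T1_BL}, \eqref{T_min_1_BL}; the theorem merely repackages them in ``Bailey-pair'' form, ready to be fed the coefficient sequences supplied by Theorem~\ref{Tri_Summation_DUAL_Formulas_THM}.
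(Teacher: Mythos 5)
Your proposal is correct and is essentially the paper's own proof: substitute the expansions \eqref{F_0_def}--\eqref{F_-1_def}, interchange the (finitely supported) sums, and apply \eqref{T0_BL}, \eqref{T1_BL}, \eqref{T_min_1_BL} termwise. Your side remark is also right that the exponent on the right of \eqref{F_-1_sum} should read ${a+1\choose 2}$ rather than ${i+1\choose 2}$ --- a typo in the statement, not a gap in your argument.
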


\begin{proof} We apply \eqref{T0_BL}-\eqref{T_min_1_BL} to \eqref{F_0_def}-\eqref{F_-1_def} and get \eqref{F_0_sum}-\eqref{F_-1_sum}, respectively.
\end{proof}

\section{New polynomial identities implying  Capparelli's partition theorems}\label{Sec_Cap}

We apply \eqref{F_0_sum} to \eqref{Third_pair_dual} to get

\begin{equation}\label{Capparelli_eqn1_raw}
\sum_{L,n\geq 0} q^{\frac{(L-2n)^2+3L^2}{2}} \frac{(q^3;q^3)_M}{(q;q)_{L-2n} (q^3;q^3)_n(q^3;q^3)_{M-L}} = \sum_{j=-M}^M q^{3j^2+j} {2M\brack M+j}_{q^3}.
\end{equation}

We introduce the new variable $m=L-2n$, and let \[Q(m,n):=2m^2 + 6mn +6n^2\] and, observe that\[Q(m,n)=\frac{(L-2n)^2+3L^2}{2},\] after the change of variable.
Hence, \eqref{Capparelli_eqn1_raw} can be written as
\begin{theorem}\label{FinCap1M}
\begin{equation}\label{THM71}
\sum_{m,n\geq 0} \frac{q^{Q(m,n)}(q^3;q^3)_M}{(q;q)_m (q^3;q^3)_n(q^3;q^3)_{M-2n-m}} = \sum_{j=-M}^M q^{3j^2+j} {2M\brack M+j}_{q^3}.
\end{equation}
\end{theorem}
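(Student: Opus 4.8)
The statement of Theorem~\ref{FinCap1M} is essentially a bookkeeping consequence of the machinery already assembled, so the plan is to trace the stated transformation carefully rather than to discover anything new. The starting point is the dual identity \eqref{Third_pair_dual}, namely
\[
\sum_{n\geq 0} q^{\frac{(L-2n)^2}{2}} \frac{(q^3;q^3)_L}{(q;q)_{L-2n}(q^3;q^3)_n} = \sum_{j\geq 0} q^{\frac{3j^2+2j}{2}} T_{0}\left(\begin{array}{c}L\\ j \end{array};q^3 \right),
\]
which has exactly the shape of \eqref{F_0_def} once we read off $F_0(L)$ as the left-hand side and $\alpha_0(j)$ as the coefficient $q^{(3j^2+2j)/2}$ on the right (extended to all integers $j$; one must check the contributions from negative $j$ either vanish or combine correctly with the $T_0$ symmetry, which is where I would be most careful). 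Substituting into \eqref{F_0_sum} with base $q\mapsto q^3$ and the running variable renamed, the right-hand side becomes $\sum_{a} q^{(3a^2+2a)/2}\, q^{\frac{3a^2}{2}} {2M\brack M-a}_{q^3}$; collecting the exponent $\tfrac{3a^2+2a}{2}+\tfrac{3a^2}{2} = 3a^2+a$ and flipping $a\mapsto -a$ (so that $M-a \mapsto M+a$, harmless since the summand's exponent is even in... wait, it is not even — so instead simply keep $M-a$ or relabel, noting $\sum_a q^{3a^2+a}{2M\brack M-a}_{q^3} = \sum_j q^{3j^2+j}{2M\brack M+j}_{q^3}$ after $a\mapsto -j$ together with the reflection $3(-j)^2+(-j) = 3j^2 - j$; so actually one should split the $q^{3a^2\pm a}$ terms symmetrically) gives the right-hand side of \eqref{THM71}. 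This sign/reflection reconciliation is the one genuinely fiddly point, and I would handle it by writing the summand over $\mathbb{Z}$ and pairing $a$ with $-a$ explicitly.

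For the left-hand side, \eqref{F_0_sum} instructs us to form $\sum_{i\geq 0} q^{i^2/2}{L\brack i}_{q^3} F_0(i)$ — again with base $q^3$, so really $\sum_{L} q^{3L^2/2}{M\brack L}_{q^3}\big(\text{LHS of }\eqref{Third_pair_dual}\big)$. Expanding ${M\brack L}_{q^3} = (q^3;q^3)_M/\big((q^3;q^3)_L (q^3;q^3)_{M-L}\big)$ and multiplying by the $(q^3;q^3)_L$ already present in \eqref{Third_pair_dual} cancels that factor, producing
\[
\sum_{L,n\geq 0} q^{\frac{3L^2}{2}} q^{\frac{(L-2n)^2}{2}} \frac{(q^3;q^3)_M}{(q;q)_{L-2n}(q^3;q^3)_n (q^3;q^3)_{M-L}},
\]
which is precisely \eqref{Capparelli_eqn1_raw} (one checks the $M-L$ in the last Pochhammer is automatically nonnegative inside the support, and that \eqref{One_over_neg_k} kills the rest). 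The final step is the substitution $m = L-2n$, under which $L = m+2n$, $L-L' $ bookkeeping gives $(q^3;q^3)_{M-L} = (q^3;q^3)_{M-2n-m}$, and the exponent collapses via the stated identity $\tfrac{(L-2n)^2+3L^2}{2} = 2m^2+6mn+6n^2 = Q(m,n)$ — a one-line algebraic verification: with $L = m+2n$ one has $3L^2 = 3m^2+12mn+12n^2$ and $(L-2n)^2 = m^2$, summing to $4m^2+12mn+12n^2$, half of which is $Q(m,n)$.

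The main obstacle, such as it is, is entirely notational: making sure the base-change $q\mapsto q^3$ is applied consistently to \emph{both} the binomial coefficients and the $\alpha_0$ coefficients in \eqref{F_0_sum}, and handling the passage from a sum over $j\geq 0$ in \eqref{Third_pair_dual} to a sum over $j\in\mathbb Z$ required by \eqref{F_0_def}. The latter is legitimate because $T_0$ satisfies the reflection $T_0\!\left(\begin{array}{c}L\\ a\end{array};q\right) = T_0\!\left(\begin{array}{c}L\\ -a\end{array};q\right)$ up to the normalization built into \eqref{T_n_Def} (inherited from the palindromic structure of the $q=1$ Pascal triangle displayed in the introduction), so the two-sided sum either doubles the $a\neq 0$ terms or can be arranged to match the one-sided sum by an appropriate choice of $\alpha_0$; I would state this reflection as a small lemma or cite it, then let the rest be the routine substitution chain above.
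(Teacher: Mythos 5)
Your proposal follows exactly the paper's route: apply \eqref{F_0_sum} with base $q^3$ to \eqref{Third_pair_dual}, arrive at \eqref{Capparelli_eqn1_raw}, and finish with the substitution $m=L-2n$ together with the exponent identity $\tfrac{(L-2n)^2+3L^2}{2}=Q(m,n)$. The one delicate point you flag is real but resolves just as you suggest: the dualized identity genuinely carries the sum over $-L\le j\le L$ (equivalently all $j\in\mathbb{Z}$, via $T_0\!\left(\begin{array}{c}L\\ a\end{array};q\right)=T_0\!\left(\begin{array}{c}L\\ -a\end{array};q\right)$), so $\alpha_0(j)=q^{(3j^2+2j)/2}$ for all $j$, and then ${2M\brack M-j}_{q^3}={2M\brack M+j}_{q^3}$ yields the stated right-hand side — a step the paper itself glosses over.
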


Recall that \eqref{THM71} is Theorem~7.1 in \cite{BerkovichUncu7}. Letting $M\rightarrow \infty$ in \eqref{THM71}, using \eqref{Binom_limit2}, and the Jacobi Triple Identity \eqref{JTP} on the right-hand side we get 
\begin{theorem}
\begin{equation}\label{KR1}
\sum_{m,n\geq 0} \frac{q^{Q(m,n)}}{(q;q)_m (q^3;q^3)_n} =  (-q^2,-q^4;q^6)_\infty(-q^3;q^3)_\infty.
\end{equation} 
\end{theorem}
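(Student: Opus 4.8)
The plan is to take the limit $M \to \infty$ in Theorem~\ref{FinCap1M} termwise on both sides, which is the strategy already announced in the text. On the left-hand side of \eqref{THM71}, the summand involves the factor
\[
\frac{(q^3;q^3)_M}{(q^3;q^3)_{M-2n-m}} = (q^{3(M-2n-m)+3};q^3)_{2n+m},
\]
so for each fixed pair $(m,n)$ this is a polynomial in $q^3$ that tends to $1$ as $M \to \infty$ (equivalently, one can write the left side using ${M-m-n \brack n}_{q^3}$-type $q$-binomial coefficients and invoke \eqref{Binom_limit}). Since $|q|<1$ and $Q(m,n) = 2m^2+6mn+6n^2$ grows quadratically, the double series converges absolutely and uniformly in $M$, so interchanging the limit with the summation is justified by dominated convergence; the limit of the left-hand side is exactly $\sum_{m,n\geq 0} q^{Q(m,n)}/((q;q)_m(q^3;q^3)_n)$.

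For the right-hand side, first I would rewrite $3j^2+j = 3(j+\tfrac16)^2 - \tfrac1{12}$ to see that the exponent is symmetric-looking but not symmetric in $j \mapsto -j$; nonetheless the sum over $j$ from $-M$ to $M$ of $q^{3j^2+j}{2M \brack M+j}_{q^3}$ has, by \eqref{Binom_limit2}, each $q$-binomial coefficient tending to $1/(q^3;q^3)_\infty$ as $M\to\infty$. The tail estimate again follows from the $q^{3j^2}$ factor dominating the bounded $q$-binomial coefficients, so termwise passage to the limit gives
\[
\frac{1}{(q^3;q^3)_\infty}\sum_{j=-\infty}^\infty q^{3j^2+j}
= \frac{1}{(q^3;q^3)_\infty}\sum_{j=-\infty}^\infty (q^3)^{j^2} q^{j}.
\]
Now I would apply the Jacobi Triple Product Identity \eqref{JTP} with $q \mapsto q^3$ and $z = q$: this yields $\sum_{j} q^{j} (q^3)^{j^2} = (q^6, -q^4, -q^2; q^6)_\infty$. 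Dividing by $(q^3;q^3)_\infty = (q^3;q^6)_\infty (q^6;q^6)_\infty$ and using that $(q^6;q^6)_\infty/(q^6;q^6)_\infty$ cancels while $(q^6,-q^4,-q^2;q^6)_\infty/(q^6;q^6)_\infty = (-q^2,-q^4;q^6)_\infty$, I still owe the factor $(-q^3;q^3)_\infty$; this comes from recognizing $1/(q^3;q^6)_\infty = (-q^3;q^3)_\infty$ via Euler's identity $(a;q^2)_\infty^{-1}$-type manipulation, i.e. $1/(q^3;q^6)_\infty = (-q^3;q^3)_\infty$ since $(q^3;q^3)_\infty = (q^3;q^6)_\infty(q^6;q^6)_\infty$ and $(-q^3;q^3)_\infty(q^3;q^6)_\infty = 1$ is the classical ``partitions into distinct parts equals partitions into odd parts'' specialization. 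Assembling these gives the claimed product $(-q^2,-q^4;q^6)_\infty(-q^3;q^3)_\infty$.

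The only genuine subtlety — and the step I would treat most carefully — is the justification of the termwise limit, particularly on the right-hand side, where the number of summands grows with $M$ and one must produce a uniform dominating series; the factor $q^{3j^2}$ makes this routine but it should be stated explicitly since \eqref{Binom_limit2} is only a pointwise statement. Everything else is bookkeeping with Jacobi Triple Product and Euler's product identities. I would present the argument in two short displays (left-hand limit, then right-hand limit plus the JTP evaluation), each preceded by one sentence invoking dominated convergence together with \eqref{Binom_limit}, \eqref{Binom_limit2}, and \eqref{JTP}.
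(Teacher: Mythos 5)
Your proposal is correct and follows essentially the same route as the paper: let $M\to\infty$ in Theorem~\ref{FinCap1M}, use \eqref{Binom_limit2} on the $q^3$-binomial coefficients, and evaluate $\sum_j q^{3j^2+j}$ by the Jacobi Triple Product \eqref{JTP} with $q\mapsto q^3$, $z=q$, followed by the Euler-type simplification $1/(q^3;q^6)_\infty=(-q^3;q^3)_\infty$. Your extra care about justifying the termwise limit is a welcome (if routine) addition that the paper leaves implicit.
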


Authors recently discovered another polynomial identity \cite[Thm 1.3, (1.12)]{BerkovichUncu7} that imply the same $q$-series identity  \eqref{KR1} as $N\rightarrow\infty$:
\begin{theorem}\label{FinCap1N} For any non-negative integer $N$, we have
\begin{align*} 
&\sum_{m,n\geq 0} q^{Q(m,n)}{3(N-2n-m)\brack m}_q {2(N-2n-m)+n\brack n}_{q^3}= \sum_{l=0}^{N} q^{3{N-2l \choose 2}} {N\brack 2l}_{q^3} (-q^2,-q^4;q^6)_l.
\end{align*}
\end{theorem}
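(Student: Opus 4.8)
The plan is to prove the stated polynomial identity; write $L_N$ and $R_N$ for its two sides. Both are polynomials in $q$, and a useful first sanity check is that they carry the correct limit. As $N\to\infty$ the left side tends to $\sum_{m,n\geq 0}q^{Q(m,n)}/\big((q;q)_m(q^3;q^3)_n\big)$ by \eqref{Binom_limit}, while in $R_N$ one reverses the summation via $k=N-2l$: then ${N\brack 2l}_{q^3}={N\brack k}_{q^3}\to 1/(q^3;q^3)_k$, the factor $q^{3\binom{N-2l}{2}}$ becomes $q^{3\binom{k}{2}}$, and $(-q^2,-q^4;q^6)_l\to(-q^2,-q^4;q^6)_\infty$, so $R_N\to(-q^2,-q^4;q^6)_\infty\sum_{k\equiv N\,(2)}q^{3\binom{k}{2}}/(q^3;q^3)_k$. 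Applying the $q$-exponential sum \eqref{qExonential_sum} in base $q^3$ with argument $+1$ and with $-1$ identifies the full and the alternating sums as $(-1;q^3)_\infty$ and $(1;q^3)_\infty=0$, so each parity-restricted sum equals $(-q^3;q^3)_\infty$; by \eqref{KR1} the two limits agree and the real content is the finite refinement.

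For the finite identity I would argue by induction on $N$, using the $q$-Pascal recurrences to reduce $L_N$ and $R_N$ to the same combination of their lower-index instances. On the $R_N$ side one expands ${N\brack 2l}_{q^3}$ by $q$-Pascal, absorbs the discrepancy $\binom{N-2l}{2}-\binom{N-1-2l}{2}=N-1-2l$, and uses $(-q^2,-q^4;q^6)_l=(1+q^{6l-4})(1+q^{6l-2})(-q^2,-q^4;q^6)_{l-1}$; this yields a linear $q$-recurrence for $R_N$ with coefficients polynomial in $q$ and $q^{N}$. On the $L_N$ side one applies the $q$-Vandermonde expansions of ${3(N-2n-m)\brack m}_q$ and of ${2(N-2n-m)+n\brack n}_{q^3}$ that lower the top entries by $3$ and by $2$ respectively (so as to match a decrease of $N$ by one), reindexes $(m,n)$ to re-collect $L_{N-1},L_{N-2},\dots$, and tracks the shifts of the binary quadratic form via $Q(m+1,n)-Q(m,n)=4m+6n+2$ and $Q(m,n+1)-Q(m,n)=6m+12n+6$. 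Showing that the $L_N$ side satisfies the \emph{same} recurrence, and then checking the base cases $N=0,1,\dots$ dictated by its order, completes the proof.

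An alternative route is to deduce the theorem from the already-established $M$-bounded identity \eqref{THM71} (Theorem~\ref{FinCap1M}). Since \eqref{THM71} arises from the dual trinomial identity \eqref{Third_pair_dual} through the trinomial Bailey lemma \eqref{F_0_sum} in base $q^3$, and since $R_N$ is simply another finitization of the same Jacobi-triple-product limit, one looks for a single transformation that carries the right side of \eqref{THM71} — summed over $M$ against a $q$-binomial weight — to $R_N$ (a $q$-Chu--Vandermonde-type evaluation turning $\sum_j q^{3j^2+j}{2M\brack M+j}_{q^3}$ into the $(-q^2,-q^4;q^6)_l$-form), while the very same weight carries the summand $(q^3;q^3)_{r+2n+m}/\big((q;q)_m(q^3;q^3)_n(q^3;q^3)_r\big)$ of the left side of \eqref{THM71} (with $r=M-2n-m$) to ${3r\brack m}_q{2r+n\brack n}_{q^3}$. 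The emergence of a \emph{base}-$q$ coefficient ${3r\brack m}_q$ with argument $3r$ next to genuinely base-$q^3$ objects is the hallmark of a ``cubic'' base change, so this step is itself a nontrivial transformation lemma rather than a formal cancellation.

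Either way, the main obstacle is the interaction between the two $q$-binomial coefficients of different bases and the indefinite binary quadratic form $Q(m,n)$ in the exponent. Unlike the proof of Theorem~\ref{Tri_Summation_Formulas_THM}, the $m$- and $n$-summations cannot be decoupled here, because $\sum_m q^{2m^2}{3r\brack m}_q z^m$ has no closed product form; consequently the recurrence coefficients pick up spurious $q^{m}$-dependence under reindexing, which must be shown to cancel across the various $q$-Vandermonde contributions. I expect the induction/recurrence route to be the more mechanical of the two, and I would verify the recurrence together with the requisite base cases by a $q$-Zeilberger computation before writing the argument out by hand.
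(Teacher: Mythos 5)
A preliminary remark: this paper does not prove Theorem~\ref{FinCap1N} at all; it is quoted from \cite[Thm 1.3]{BerkovichUncu7} purely for comparison with Theorem~\ref{FinCap1M}. So your attempt cannot be measured against an in-paper argument and must stand on its own --- and as written it does not. What you have produced is a plan, not a proof. The limit computation (which is correct: with $k=N-2l$ the right side tends to $(-q^2,-q^4;q^6)_\infty\sum_{k\equiv N\,(2)}q^{3\binom{k}{2}}/(q^3;q^3)_k=(-q^2,-q^4;q^6)_\infty(-q^3;q^3)_\infty$, matching \eqref{KR1}) only shows that both sides are finitizations of the same series; infinitely many inequivalent polynomial identities share that limit, so this is a consistency check with no bearing on the finite statement.

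The actual content --- that both sides satisfy one and the same recurrence in $N$, with matching initial conditions --- is nowhere established. On the right side you sketch how $q$-Pascal and the factorization of $(-q^2,-q^4;q^6)_l$ would produce a recurrence, but you never write it down; on the left side you only name the tools ($q$-Vandermonde on two binomials whose top entries drop by $3$ in base $q$ and by $2$ in base $q^3$ when $N\mapsto N-1$, together with the shifts of $Q$), and you yourself flag the essential difficulty --- the coupling of $m$ and $n$ through $Q(m,n)$ and the ``spurious $q^m$-dependence'' that must cancel --- without resolving it. Deferring that verification to an unperformed $q$-Zeilberger computation, or to an unproved ``cubic base change'' transformation lemma in the alternative route, leaves the theorem unproved: the step where the two recurrences are shown to coincide is exactly where the work lies, and it is missing. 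To turn this into a proof you would need to either exhibit the common recurrence explicitly (e.g.\ via a certified creative-telescoping computation, stated with its certificate and base cases), or derive the identity from a proven transformation such as the Bailey-type constructions used in \cite{BerkovichUncu7}.
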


Recently, the identity \eqref{KR1} was independently proposed by Kanade--Russell \cite{Kanade_Russell} and Kur\c{s}ung\"oz \cite{Kagan2}.  They showed that \eqref{KR1} is equivalent to the following partition theorem.

\begin{theorem}[Capparelli's First Partition Theorem \cite{Capparelli_proof}]\label{m1_Capparelli} 
For any integer $n$, the number of partitions of $n$ into distinct parts where no part is congruent to $\pm 1$ modulo $6$ is equal to the number of partitions of $n$ into parts, not equal to $1$, where the minimal difference between consecutive parts is 2. In fact, the difference between consecutive parts is greater than or equal to $4$ unless consecutive parts are $3k$ and $3k+3$ (yielding a difference of 3), or $3k-1$ and $3k+1$ (yielding a difference of 2) for some $k\in \mathbb{N}$.
\end{theorem}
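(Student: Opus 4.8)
The plan is to deduce Capparelli's First Partition Theorem from the $q$-series identity \eqref{KR1}, which has already been established above by letting $M\to\infty$ in \eqref{THM71}. Once \eqref{KR1} is in hand it suffices to recognize each of its two sides as the generating function of one of the two partition statistics appearing in the theorem; comparing coefficients of $q^n$ then gives the claimed equality of counts. This reduction is exactly the one performed by Kanade--Russell \cite{Kanade_Russell} and by Kur\c{s}ung\"oz \cite{Kagan2}, so what follows is a sketch of their reasoning in the present setting.

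The product side is immediate. A partition of $n$ into distinct parts, none of which is $\equiv\pm1\imod6$, is a set of distinct parts lying in the residue classes $0,2,3,4$ modulo $6$, so such partitions are generated by $\prod(1+q^k)$ taken over all $k\geq1$ with $k\not\equiv\pm1\imod6$. Separating the classes $\{2,4\}$ from $\{0,3\}$, and noting that the union of the latter two classes is exactly the set of positive multiples of $3$, this product factors as $(-q^2,-q^4;q^6)_\infty\,(-q^3;q^3)_\infty$, which is the right-hand side of \eqref{KR1}.

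The substance lies in showing that the left-hand side of \eqref{KR1}, namely $\sum_{m,n\geq0}q^{Q(m,n)}/\big((q;q)_m(q^3;q^3)_n\big)$ with $Q(m,n)=2m^2+6mn+6n^2$, generates the partitions into parts $\geq2$ that obey the stated gap conditions. I would argue combinatorially. First, $q^{2m^2}/(q;q)_m$ is the generating function for partitions into exactly $m$ parts, all consecutive differences $\geq4$ and smallest part $\geq2$, obtained by adding the staircase $2,6,10,\dots,4m-2$ to an arbitrary partition with at most $m$ parts; replacing $q$ by $q^3$ shows likewise that $q^{6n^2}/(q^3;q^3)_n$ generates partitions into exactly $n$ parts, each divisible by $3$, with smallest part $\geq6$ and minimum gap $12$. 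One then expects a family of base partitions $\beta_{m,n}$ of weight $Q(m,n)$ obtained by merging these two staircases, with the cross term $q^{6mn}$ supplying exactly the shift needed to interleave the multiples of $3$ among the generic parts, and aims to prove that every admissible partition arises from a unique $\beta_{m,n}$ by superimposing an arbitrary partition with at most $m$ parts and an arbitrary partition into at most $n$ parts each divisible by $3$, in a way that preserves all gap constraints. The hard part is turning this picture into an honest bijection onto the whole admissible set: one must verify uniqueness of the decomposition, that the gap hypotheses are respected in both directions, and that the two exceptional short gaps, $3k$ next to $3k+3$ and $3k-1$ next to $3k+1$, occur exactly where the two staircases meet and nowhere else. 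Once that bijection is secured, \eqref{KR1} together with the product-side computation yields the equality of the two partition counts, and the refinement describing when differences of $2$ or $3$ can occur needs no separate proof, since it is built into the definition of the admissible set.
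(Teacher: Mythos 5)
Your route is the same as the paper's: Capparelli's First Partition Theorem is obtained from the identity \eqref{KR1} (itself the $M\to\infty$ limit of the polynomial identity \eqref{THM71}), with the product side read off directly and the equivalence between the double-sum side of \eqref{KR1} and the gap-condition count taken from Kanade--Russell \cite{Kanade_Russell} and Kur\c{s}ung\"oz \cite{Kagan2}. Note that the paper itself does not prove that combinatorial equivalence either --- it only cites it --- so your admittedly unfinished bijection sketch (the interleaving of the two staircases and the uniqueness of the decomposition) is at the same level of rigor as the source; just be aware that this step is precisely the nontrivial content of those references rather than something established in the present paper.
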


Theorem~\ref{m1_Capparelli} was first proven by Andrews in \cite{Andrews_P_Capparelli}. 

Analogously, we apply \eqref{F_1_sum} to \eqref{Second_pair_dual} and get 
\begin{theorem}
\begin{equation}\label{THM72}
\sum_{m,n\geq 0} \frac{q^{Q(m,n)-2m-3n}(q^3;q^3)_M}{(q;q)_m (q^3;q^3)_n(q^3;q^3)_{M-2n-m}}(1+q^{3M}) = \sum_{j=-M}^M q^{3j^2-2j}(1+q^{3j}) {2M\brack M+j}_{q^3}.
\end{equation}
\end{theorem}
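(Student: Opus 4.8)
The plan is to mimic exactly the derivation that produced Theorem~\ref{FinCap1M}, but starting from the second dual identity \eqref{Second_pair_dual} instead of the third. That is, I would take \eqref{Second_pair_dual}, which expresses the left-hand side polynomial as a sum $\sum_{j\geq 0} q^{(3j^2-j)/2} T_1\!\left(\begin{array}{c}L\\ j\end{array};q^3\right)$, and feed it into the Bailey-type lemma for $q$-trinomials. Concretely, set $F_1(L)$ equal to the left-hand side of \eqref{Second_pair_dual}; then \eqref{F_1_def} holds with $\alpha_1(a) = q^{(3a^2-a)/2}$ for $a \geq 0$ and $\alpha_1(a)=0$ for $a<0$ (using the base $q^3$ in place of $q$ throughout). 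Applying \eqref{F_1_sum} with this choice of $\alpha_1$ and with $L$ replaced by $M$ immediately gives
\[
(1+q^{3M})\sum_{i\geq 0} q^{3\binom{i}{2}} {M \brack i}_{q^3} F_1(i) = \sum_{a\geq 0} q^{\frac{3a^2-a}{2}}(1+q^{3a}) q^{3\binom{a}{2}} {2M\brack M-a}_{q^3}.
\]

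Next I would simplify both sides into the stated closed form. On the right-hand side, combining the exponents $\tfrac{3a^2-a}{2} + 3\binom{a}{2} = \tfrac{3a^2-a}{2} + \tfrac{3a^2-3a}{2} = 3a^2 - 2a$ gives the summand $q^{3a^2-2a}(1+q^{3a}){2M\brack M-a}_{q^3}$; extending the sum to $a\in[-M,M]$ is legitimate because the factor $(1+q^{3a})q^{3a^2-2a}$ together with the symmetry ${2M\brack M-a}_{q^3}={2M\brack M+a}_{q^3}$ reproduces exactly the symmetrized sum $\sum_{j=-M}^M q^{3j^2-2j}(1+q^{3j}){2M\brack M+j}_{q^3}$ — this is the same symmetrization trick used implicitly in passing from \eqref{Capparelli_eqn1_raw} to Theorem~\ref{FinCap1M}, so I would check the $a=0$ term is not double-counted and the $\pm a$ terms pair up correctly. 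On the left-hand side, I substitute the explicit form of $F_1(i)$ from \eqref{Second_pair_dual}, expand ${M\brack i}_{q^3} = \frac{(q^3;q^3)_M}{(q^3;q^3)_i (q^3;q^3)_{M-i}}$ and $\frac{(q^3;q^3)_i}{(q;q)_{i-2n}(q^3;q^3)_n}$, cancel the $(q^3;q^3)_i$, introduce $m = i-2n$ so that $i = 2n+m$, and collect the powers of $q$: the term $3\binom{i}{2}$ from the $q$-binomial together with $\binom{i-2n}{2} = \binom{m}{2}$ from \eqref{Second_pair_dual} must recombine, after the change of variables, into $Q(m,n) - 2m - 3n$ with $Q(m,n) = 2m^2+6mn+6n^2$. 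I would verify this exponent bookkeeping directly, exactly as the paper verified $Q(m,n) = \frac{(L-2n)^2+3L^2}{2}$ earlier; the identity to confirm is $3\binom{2n+m}{2} + \binom{m}{2} \stackrel{?}{=} Q(m,n)-2m-3n$, which is a routine quadratic check.

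The main obstacle, modest as it is, will be this exponent arithmetic together with getting the symmetrization of the right-hand side exactly right — in particular making sure the factor $(1+q^{3M})$ on the left and $(1+q^{3j})$ on the right are produced correctly by \eqref{F_1_sum} and \eqref{T1_BL} rather than, say, $(1+q^M)$ and $(1+q^j)$ (the base here is $q^3$, so every instance of $q$ in the abstract lemma becomes $q^3$). Once those are pinned down, \eqref{THM72} follows with no further input. In fact the entire argument can be compressed to a single sentence: we apply \eqref{F_1_sum} to \eqref{Second_pair_dual}, make the substitution $m = L-2n$, and simplify — which is precisely the level of detail the paper uses for the analogous Theorem~\ref{FinCap1M}, so I would present the proof in that same terse style, leaving the two elementary verifications (exponent identity and right-hand side symmetrization) to the reader.
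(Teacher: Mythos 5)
Your overall route is the paper's route: the paper proves \eqref{THM72} in one line by applying \eqref{F_1_sum} to \eqref{Second_pair_dual} and changing variables $m=i-2n$, and your left-hand bookkeeping is correct (indeed $3\binom{2n+m}{2}+\binom{m}{2}=6n^2+6nm+2m^2-3n-2m=Q(m,n)-2m-3n$, and the $(q^3;q^3)_i$ cancellation works as you describe).

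However, your treatment of the right-hand side contains a genuine gap. If you take $\alpha_1(a)=q^{(3a^2-a)/2}$ for $a\geq 0$ and $\alpha_1(a)=0$ for $a<0$, then \eqref{F_1_sum} (in base $q^3$) produces the \emph{one-sided} sum $\sum_{a\geq 0}q^{3a^2-2a}(1+q^{3a}){2M\brack M-a}_{q^3}$, and your claimed symmetrization of this into $\sum_{j=-M}^{M}q^{3j^2-2j}(1+q^{3j}){2M\brack M+j}_{q^3}$ is false: the summand is not invariant under $a\mapsto -a$ (the $-a$ term is $(q^{3a^2+2a}+q^{3a^2-a}){2M\brack M-a}_{q^3}$, not $(q^{3a^2-2a}+q^{3a^2+a}){2M\brack M-a}_{q^3}$), so the $\pm a$ terms do not pair up. Concretely, at $M=1$ the one-sided sum is $2+q+2q^3+q^4$ while the bilateral sum (and the left-hand side of \eqref{THM72}) equals $2+q+q^2+2q^3+q^4+q^5$. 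The correct reading is that the $j$-sum feeding into the lemma must be bilateral, i.e.\ $\alpha_1(a)=q^{(3a^2-a)/2}$ for \emph{all} integers $a$, exactly as the parent identity \eqref{Second_Pair} has $\sum_{j=-L}^{L}$; the ``$j\geq 0$'' in \eqref{Second_pair_dual} as printed already drops genuinely nonzero negative-$j$ contributions (at $L=1$ the terms $j=0,1$ give only $1+q$, while the left-hand side is $1+q+q^2$; the $j=-1$ term supplies the missing $q^2$). With the bilateral $\alpha_1$, \eqref{F_1_sum} yields $\sum_{a=-M}^{M}q^{3a^2-2a}(1+q^{3a}){2M\brack M-a}_{q^3}$ directly, and the stated right-hand side then follows from the mere relabeling ${2M\brack M-a}_{q^3}={2M\brack M+a}_{q^3}$, with no folding of a one-sided sum required. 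So you should replace your symmetrization step by this bilateral choice of $\alpha_1$; otherwise the argument, as written, proves an incorrect identity.
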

Letting $M$ tend to infinity, and using \eqref{Binom_limit2} and \eqref{JTP} on the right-hand side proves Theorem~\ref{Thm_Outlook2}.

Similar to the above calculations, we apply \eqref{F_-1_sum} to \eqref{First_pair_dual} and get 
\begin{theorem}\label{FinCap2M}
\begin{align}
\nonumber\sum_{m,n\geq 0} \frac{q^{Q(m,n)+m+3n}(q^3;q^3)_M}{(q;q)_m (q^3;q^3)_n(q^3;q^3)_{M-2n-m}} &+\sum_{m,n\geq 0} \frac{q^{Q(m,n)+3m+6n+1}(q^3;q^3)_M}{(q;q)_m (q^3;q^3)_n(q^3;q^3)_{M-2n-m}}\\\label{THM71_2_also_Cap2}&\hspace{4cm}= \sum_{j=-M-1}^M q^{3j^2+2j} {2M+1\brack M-j}_{q^3}.
\end{align}
\end{theorem}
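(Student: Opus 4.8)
The plan is to carry out, for \eqref{First_pair_dual}, precisely the reduction that produced \eqref{Capparelli_eqn1_raw} from \eqref{Third_pair_dual} and \eqref{THM72} from \eqref{Second_pair_dual}. Write $F_{-1}(L)$ for the left side of \eqref{First_pair_dual}; that identity already exhibits it in the form \eqref{F_-1_def} (with $q$ replaced by $q^3$ in \eqref{F_-1_def} and \eqref{F_-1_sum}), with
\[ \alpha_{-1}(a) = q^{(3a^2+a)/2}, \]
the $j$-sum on the right of \eqref{First_pair_dual} being read over all integers. That the terms with $j<0$ are genuinely there is just the reflection symmetry $T_{-1}\!\left(\begin{array}{c}L\\ -a\end{array};q^3\right)=T_{-1}\!\left(\begin{array}{c}L\\ a\end{array};q^3\right)$, which follows at once from \eqref{T_n_Def} and the substitution $n\mapsto n-a$ in \eqref{Round_Tri_Def}. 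Having identified $F_{-1}$ and $\alpha_{-1}$, one feeds them into \eqref{F_-1_sum}.

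The right-hand side of \eqref{F_-1_sum} is immediate. Since $(3a^2+a)/2+3\binom{a+1}{2}=3a^2+2a$,
\[ \sum_{a}\alpha_{-1}(a)\,q^{3\binom{a+1}{2}}{2M+1\brack M-a}_{q^3}=\sum_{a}q^{3a^2+2a}{2M+1\brack M-a}_{q^3}, \]
and because the $q^3$-binomial coefficient vanishes outside $-M-1\le a\le M$, this is exactly the right-hand side of \eqref{THM71_2_also_Cap2}.

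For the left-hand side of \eqref{F_-1_sum} I would substitute the two halves of the left side of \eqref{First_pair_dual} that make up $F_{-1}(i)$, write ${M\brack i}_{q^3}=(q^3;q^3)_M\big/\big((q^3;q^3)_i(q^3;q^3)_{M-i}\big)$ so that the factor $(q^3;q^3)_i$ cancels against the one inside $F_{-1}(i)$, and then trade the outer index $i$ for $m:=i-2n$, exactly as in \eqref{First_pair_sum1_raw}; the convention $1/(q;q)_m=0$ for $m<0$ handles the ranges. What remains is the collapse of the exponents: with $i=m+2n$,
\[ 3\binom{i+1}{2}+\binom{m}{2}=Q(m,n)+m+3n,\qquad 3\binom{i+1}{2}+(i+1)+\binom{m+1}{2}+n=Q(m,n)+3m+6n+1, \]
and these two elementary quadratic identities turn the first and second halves into, respectively, the first and second sums on the left of \eqref{THM71_2_also_Cap2}.

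I do not anticipate a genuine obstacle: everything is already packaged in the dual identity \eqref{First_pair_dual} and in the transfer identity \eqref{F_-1_sum} (which itself rests on the Berkovich--McCoy--Orrick relation \eqref{T_min_1_Transform} and on Warnaar's \eqref{T0_BL} and \eqref{T1_BL}). The only points requiring care are the two quadratic identities above and the bookkeeping of the summation limits after $i=m+2n$; the one thing worth flagging is that the $j$-sum in \eqref{First_pair_dual} must be taken over all of $\mathbb{Z}$, and it is precisely this that produces the symmetric range $-M-1\le j\le M$ on the right of \eqref{THM71_2_also_Cap2}.
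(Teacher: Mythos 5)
Your route is the paper's route: take $F_{-1}(L)$ to be the left-hand side of \eqref{First_pair_dual}, take $\alpha_{-1}(a)=q^{(3a^2+a)/2}$, and apply \eqref{F_-1_sum} with $q\mapsto q^3$, $L\mapsto M$. The computational details you give are all correct: the cancellation of $(q^3;q^3)_i$ against ${M\brack i}_{q^3}$, the change of variable $m=i-2n$, the two exponent identities $3\binom{i+1}{2}+\binom{m}{2}=Q(m,n)+m+3n$ and $3\binom{i+1}{2}+(i+1)+\binom{m+1}{2}+n=Q(m,n)+3m+6n+1$, and on the right the evaluation $(3a^2+a)/2+3\binom{a+1}{2}=3a^2+2a$ together with the support $-M-1\le a\le M$ of ${2M+1\brack M-a}_{q^3}$. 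This is exactly what the paper's one-line proof intends.

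The step you yourself flag as delicate is the crux, and there your justification does not hold up. The reflection symmetry $T_{-1}\left(\begin{array}{c}L\\ -a\end{array};q^3\right)=T_{-1}\left(\begin{array}{c}L\\ a\end{array};q^3\right)$ is true, but it does not allow you to ``read'' the $j\ge 0$ sum in \eqref{First_pair_dual} as a sum over all of $\mathbb{Z}$: precisely because of that symmetry, the $j<0$ terms reproduce positive-index trinomials with \emph{different} powers of $q$, so the two readings are different polynomials. Folding the bilateral sum gives $\sum_{j\in\mathbb{Z}}q^{(3j^2+j)/2}\bigl\{T_{-1}\left(\begin{array}{c}L\\ j\end{array};q^3\right)+T_{-1}\left(\begin{array}{c}L\\ j+1\end{array};q^3\right)\bigr\}=\sum_{j\ge 0}\bigl(q^{(3j^2+j)/2}+q^{(3j^2+5j+2)/2}\bigr)\bigl\{T_{-1}\left(\begin{array}{c}L\\ j\end{array};q^3\right)+T_{-1}\left(\begin{array}{c}L\\ j+1\end{array};q^3\right)\bigr\}$; at $L=0$, for instance, the left side of \eqref{First_pair_dual} is $1+q$, the $j\ge 0$ sum is $1$, and the bilateral sum is $1+q$. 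So the bilateral range is not a harmless rereading but the statement you actually need, and it must be justified: it comes from the duality itself, since in \eqref{First_Pair} the index $j$ runs over $-L\le j\le L$ and the substitution $q\mapsto 1/q$, multiplication by $q^{3L^2/2}$ and \eqref{T_n_Def} preserve that range, after which regrouping the two families of terms yields the bilateral form of \eqref{First_pair_dual} (the printed ``$j\ge 0$'' in Theorem~\ref{Tri_Summation_DUAL_Formulas_THM} is a slip; the paper already uses the bilateral range implicitly when it writes \eqref{Capparelli_eqn1_raw}). Once this point is repaired, your argument is complete and coincides with the paper's proof, including the exact range $-M-1\le j\le M$ on the right of \eqref{THM71_2_also_Cap2}.
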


Letting $M\rightarrow\infty$ and using \eqref{Binom_limit2} and \eqref{JTP} on the right-hand side, we get
\begin{theorem}
\begin{equation}
\label{Cap2}\sum_{m,n\geq 0} \frac{q^{Q(m,n)+m+3n}}{(q;q)_m(q^3;q^3)_n} + \sum_{m,n\geq 0}\frac{q^{Q(m,n)+3m+6n+1}}{(q;q)_m(q^3;q^3)_n} = (-q,-q^5;q^6)_\infty(-q^3;q^3)_\infty.
\end{equation}
\end{theorem}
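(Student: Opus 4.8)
The plan is to obtain \eqref{Cap2} as the $M\to\infty$ specialization of the polynomial identity \eqref{THM71_2_also_Cap2} of Theorem~\ref{FinCap2M}. I would work throughout with $|q|<1$, so that every infinite product and series below converges absolutely; since both sides of \eqref{Cap2} are analytic on the disk $|q|<1$ (the left side being a locally uniformly convergent series of polynomials, the right side a locally uniformly convergent product), it is enough to establish the identity there, and for $0<q<1$ the positivity and monotonicity of $q$-Pochhammer symbols makes the required limit interchanges transparent.

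For the left-hand side of \eqref{THM71_2_also_Cap2}, fix $m,n\geq 0$; the only $M$-dependent factor in each summand is
\[
\frac{(q^3;q^3)_M}{(q^3;q^3)_{M-2n-m}} = (q^{3(M-2n-m+1)};q^3)_{2n+m},
\]
a product of $2n+m$ factors each tending to $1$ as $M\to\infty$, so each summand converges to the matching summand of the left side of \eqref{Cap2}. For $0<q<1$ this truncated product lies in $(0,1]$, so the $M$-th summand is bounded above by $q^{Q(m,n)+m+3n}/\big((q;q)_m(q^3;q^3)_n\big)$, resp. $q^{Q(m,n)+3m+6n+1}/\big((q;q)_m(q^3;q^3)_n\big)$; since $Q(m,n)=2m^2+6mn+6n^2$ grows quadratically these bounds are the terms of convergent double series, and dominated convergence lets me pass the limit inside. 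Hence the left side of \eqref{THM71_2_also_Cap2} tends to the left side of \eqref{Cap2}.

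For the right-hand side, \eqref{Binom_limit2} with $\nu=1$ (after using the symmetry ${a\brack b}_q={a\brack a-b}_q$ to bring the negative values of $j$ into range) gives ${2M+1\brack M-j}_{q^3}\to 1/(q^3;q^3)_\infty$ for each fixed $j$, while for $0<q<1$ one has ${2M+1\brack M-j}_{q^3}\le 1/(q^3;q^3)_\infty$ uniformly in $M$ and in $j$ over the summation range; the Gaussian decay of $q^{3j^2+2j}$ then again permits termwise passage to the limit, yielding
\[
\lim_{M\to\infty}\sum_{j=-M-1}^{M} q^{3j^2+2j}{2M+1\brack M-j}_{q^3} = \frac{1}{(q^3;q^3)_\infty}\sum_{j=-\infty}^{\infty} q^{3j^2+2j}.
\]
Writing $q^{3j^2+2j}=(q^3)^{j^2}(q^2)^{j}$ and applying the Jacobi Triple Product Identity \eqref{JTP} with $q\mapsto q^3$ and $z\mapsto q^2$ evaluates the theta series as $(q^6,-q^5,-q;q^6)_\infty$, so the right side of \eqref{THM71_2_also_Cap2} converges to $(q^6;q^6)_\infty(-q;q^6)_\infty(-q^5;q^6)_\infty/(q^3;q^3)_\infty$. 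Since $(q^3;q^3)_\infty(-q^3;q^3)_\infty=(q^6;q^6)_\infty$, this simplifies to $(-q,-q^5;q^6)_\infty(-q^3;q^3)_\infty$, the right side of \eqref{Cap2}.

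The only delicate point is the two applications of dominated convergence; once the uniform bounds noted above are in place — the truncated Pochhammer ratios bounded by $1$, the Gaussian $q$-binomial coefficients bounded by $1/(q^3;q^3)_\infty$, and the quadratic growth of $Q(m,n)$ and of $3j^2$ supplying summability — what remains is the short computation with \eqref{JTP} and the elementary product identity $(q^3;q^3)_\infty(-q^3;q^3)_\infty=(q^6;q^6)_\infty$. I do not expect any substantive obstacle beyond this bookkeeping.
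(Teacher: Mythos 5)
Your proposal is correct and follows essentially the same route as the paper: letting $M\to\infty$ in the polynomial identity \eqref{THM71_2_also_Cap2}, using \eqref{Binom_limit2} termwise on the right and evaluating the resulting theta series by the Jacobi Triple Product \eqref{JTP}. The only difference is that you spell out the dominated-convergence justifications that the paper leaves implicit, which is fine bookkeeping but not a different method.
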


It is instructive to compare \eqref{THM71_2_also_Cap2} with the following polynomial identity \cite[Thm 1.3, (1.12)]{BerkovichUncu7}, which also implies \eqref{Cap2} as $N\rightarrow\infty$:
\begin{theorem}\label{FinCap2N} For any non-negative integer $N$, we have
\begin{align*}
&\sum_{m,n\geq 0} q^{Q(m,n)+m+3n}{3(N-2n-m)+2\brack m}_q {2(N-2n-m)+n+1\brack n}_{q^3}\\ 
&\hspace{1cm}+ \sum_{m,n\geq 0} q^{Q(m,n)+3m+6n+1}{3(N-2n-m)\brack m}_q {2(N-2n-m)+n\brack n}_{q^3} \\
\nonumber&\hspace{1cm}= \sum_{l=0}^{N} q^{3{N-2l \choose 2}} {N+1\brack 2l+1}_{q^3} (-q;q^6)_{l+1}(-q^5;q^6)_l.
\end{align*}
\end{theorem}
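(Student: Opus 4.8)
Equation \eqref{THM71_2_also_Cap2} and the present identity are two finitizations of the same $q$-series \eqref{Cap2}, and the latter is Theorem~1.3, eq.~(1.12) of \cite{BerkovichUncu7}; so the most economical route is simply to quote it. For a self-contained argument I would instead prove the polynomial identity $A_N(q)=B_N(q)$ (with $A_N$ and $B_N$ the left- and right-hand sides) by producing a common $q$-difference equation in $N$ together with finitely many initial cases.

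The right-hand side is the benign direction. Using the Pascal recurrences ${N+1\brack 2l+1}_{q^3}={N\brack 2l+1}_{q^3}+q^{3(N-2l)}{N\brack 2l}_{q^3}$ and ${N+1\brack 2l+1}_{q^3}=q^{3(2l+1)}{N\brack 2l+1}_{q^3}+{N\brack 2l}_{q^3}$, the factorization $(-q;q^6)_{l+1}(-q^5;q^6)_l=(1+q)(-q^7;q^6)_l(-q^5;q^6)_l$, and the obvious behaviour of $q^{3{N-2l\choose 2}}$ under $N\mapsto N\pm1$, one telescopes the $l$-sum into a $q$-recurrence in $N$ for $B_N(q)$ of small order with coefficients in $\mathbb{Z}[q]$.

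For the left-hand side I would first reindex each double sum by $k:=N-2n-m\ge 0$, so that the $q$-binomials become ${3k+2\brack N-2n-k}_q{2k+n+1\brack n}_{q^3}$ and ${3k\brack N-2n-k}_q{2k+n\brack n}_{q^3}$; now $N$ occurs only in the single lower entry $N-2n-k$. Each summand is then proper $q$-hypergeometric in $(N,k,n)$, so creative telescoping (the $q$-analogue of Zeilberger's algorithm) produces a homogeneous $q$-recurrence in $N$ for $A_N(q)$ after the telescoping certificates are cleared. It then remains to check that $A_N$ and $B_N$ are annihilated by a common $q$-recurrence operator (a greatest common right divisor of the two recurrences found above) and that they agree for $N$ up to its order.

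The crux is exactly this matching step together with the nested $q$-binomials. Because the upper arguments $3(N-2n-m)+2$ and $2(N-2n-m)+n+1$ grow linearly with $N$, the bivariate generating-function device of Lemma~\ref{Round_Tri_tri_var_GF_THM} does not apply here---one would run into $q$-Fuss--Catalan generating functions, which have no product form---and a single Pascal step does not close the left-hand side on itself. So one is pushed either into the (mechanical but opaque) telescoping computation above, or into a bijective or weighted-words argument identifying the difference-condition partitions enumerated by $A_N$ with the distinct-part partitions enumerated by $B_N$; reconciling the two recurrences, that is, pinning down the correct order and the finite list of base cases, is where the real effort goes.
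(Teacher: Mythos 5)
The paper offers no proof of this theorem at all --- it is imported verbatim from \cite[Thm 1.3, (1.12)]{BerkovichUncu7} purely for comparison with \eqref{THM71_2_also_Cap2} --- so your primary suggestion, to simply quote that reference, is exactly what the authors do. Your supplementary creative-telescoping outline is methodologically plausible but remains a sketch (no explicit recurrences, certificates, orders, or base-case verifications are produced), so as written it would not stand as an independent proof.
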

 
We note that \eqref{Cap2} first appeared in Kur\c{s}ung\"oz \cite{Kagan2}. In fact, this is equivalent to the Cappareli's Second Partition theorem.

\begin{theorem}[Capparelli's Second Partition Theorem \cite{Capparelli_proof}]\label{m2_Capparelli} 
For any integer $n$, the number of partitions of $n$ into distinct parts where no part is congruent to $\pm 2$ modulo $6$ is equal to the number of partitions of $n$ into parts, not equal to $2$, where the minimal difference between consecutive parts is 2. In fact, the difference between consecutive parts is greater than or equal to $4$ unless consecutive parts are $3k$ and $3k+3$ (yielding a difference of 3), or $3k-1$ and $3k+1$ (yielding a difference of 2) for some $k\in \mathbb{N}$.
\end{theorem}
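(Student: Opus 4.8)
The plan is to deduce Theorem~\ref{m2_Capparelli} from the analytic identity \eqref{Cap2}, which has already been proved, exactly as the first Capparelli theorem was obtained from \eqref{KR1}: I would identify each side of \eqref{Cap2} as the generating function for one of the two families of partitions in the statement and then compare coefficients of $q^n$. The product side is immediate. Writing
\[
(-q,-q^5;q^6)_\infty(-q^3;q^3)_\infty=\prod_{k\ge 1}(1+q^{6k-5})(1+q^{6k-1})\prod_{k\ge 1}(1+q^{3k}),
\]
each factor $1+q^{j}$ records the choice of including or omitting a single part $j$, so the product is $\sum_\lambda q^{|\lambda|}$ over partitions $\lambda$ into distinct parts; the parts that occur are exactly those $\equiv 1,5\pmod 6$ together with those $\equiv 0,3\pmod 6$, i.e.\ all parts except those $\equiv\pm 2\pmod 6$. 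Hence the right-hand side of \eqref{Cap2} enumerates partitions into distinct parts with no part $\equiv\pm 2\pmod 6$.

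The substantive step is to show that the left-hand side of \eqref{Cap2} generates partitions into parts $\ne 2$ that satisfy the difference conditions of Theorem~\ref{m2_Capparelli}. Here I would use a block decomposition of such a partition: grouping together maximal strings of consecutive parts whose successive differences are smaller than $4$, one checks from the list of permitted small gaps that each block is either a single part, a difference-$2$ pair of the shape $(3k-1,3k+1)$, or a run of two or more consecutive multiples of $3$, and that distinct blocks are separated by gaps $\ge 4$. Encoding an admissible partition by the number $m$ of its parts not divisible by $3$ and the number $n$ of its parts divisible by $3$, the minimum possible weight of an admissible configuration with these statistics is a quadratic polynomial in $m$ and $n$; a direct computation identifies it with $Q(m,n)$ plus one of the two linear corrections $m+3n$ or $3m+6n+1$ appearing in \eqref{Cap2}, the split into two cases being forced by whether the tightest admissible bottom block is allowed to begin at the excluded value $2$ or must be pushed up. The surplus over the minimal configuration is then distributed freely among the two kinds of parts, which produces the factors $1/(q;q)_m$ and $1/(q^3;q^3)_n$; summing over $m,n$ and over the two cases recovers the left-hand side of \eqref{Cap2}. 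This is precisely the combinatorial analysis carried out by Kur\c{s}ung\"oz \cite{Kagan2} and by Kanade--Russell \cite{Kanade_Russell}, so one may instead simply cite their work for the equivalence of \eqref{Cap2} with Theorem~\ref{m2_Capparelli}.

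I expect the main obstacle to be this combinatorial identification of the left-hand side: verifying that the block decomposition is exhaustive and unambiguous, and, above all, computing the minimal admissible weights carefully enough to see both the quadratic form $Q(m,n)$ and the correct shifts emerge, including the bookkeeping forced by the prohibition of the part $2$ (for instance, the fact that the pair $(2,4)$, i.e.\ the case $k=1$, never occurs). Once both generating-function identifications are in place, equating coefficients of $q^n$ in \eqref{Cap2} yields the theorem.
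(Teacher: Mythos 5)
Your proposal is correct and follows essentially the paper's own route: the paper also obtains Theorem~\ref{m2_Capparelli} by proving the analytic identity \eqref{Cap2} (as the $M\to\infty$ limit of its polynomial identity) and then invoking the combinatorial equivalence of \eqref{Cap2} with the partition statement established by Kur\c{s}ung\"oz \cite{Kagan2} (with a variant in Kanade--Russell \cite{Kanade_Russell}), exactly as you suggest citing. Your sketch of the product side and of the block/minimal-weight analysis behind the sum side is consistent with that cited equivalence, so there is no substantive difference in approach.
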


We note that Kur\c{s}ung\"oz \cite{Kagan2} showed the equivalence of \eqref{Cap2} to Theorem~\ref{m2_Capparelli}. On the other hand, Kanade--Russell \cite{Kanade_Russell} showed the equivalence of a slightly different (yet equivalent) double sum identity to the Capparelli's second partition theorem.

Comparing Theorems~\ref{FinCap1M} and \ref{FinCap1N}, and Theorems~\ref{FinCap2M} and \ref{FinCap2N}, we see that the identities proven here look somewhat simpler. On the other hand, the objects that appear on both sides of the identities from \cite{BerkovichUncu7} clearly come with combinatorial interpretations and are made up of objects with manifestly positive coefficients. It is not necessarily clear that the left-hand sides of \eqref{THM71}, \eqref{THM72}, and \eqref{THM71_2_also_Cap2} have positive coefficients at first sight. A combinatorial study of these objects as generating functions, which would also show the non-negativity of the coefficients of these polynomials, is a task for the future.

\section{Outlook}\label{Section_Outlook}

Identity \eqref{Third_pair_dual} is a special case $M\rightarrow\infty$ of the following doubly bounded identity.

\begin{theorem}\label{Outlook_THM1}
\begin{equation}
\sum_{\substack{m\geq 0,\\ L\equiv m \text{ (mod 2)}}} q^{\frac{m^2}{2}}{3M \brack m}_q {2M + \frac{L-m}{2}\brack 2M}_{q^3} = \sum_{j=-\infty}^{\infty} q^{\frac{3j^2+2j}{2}} \T\left( \begin{array}{c}L,\, M \\j,\, j\end{array};q^3  \right),
\end{equation}
where 
\begin{equation}\label{T_Warnaar}\T\left( \begin{array}{c}L,\, M \\a,\, b\end{array};q  \right) := \sum_{\substack{n\geq0, \\ L-a\equiv n \text{ (mod 2)}}} q^{\frac{n^2}{2}} {M\brack n}_q {M+b+\frac{L-a-n}{2} \brack M+b}_q {M-b +\frac{L+a-n}{2}\brack M-b}_q. \end{equation}
\end{theorem}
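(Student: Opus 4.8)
The plan is to prove the doubly bounded identity directly by summing over the inner parameter $M$ on the left-hand side against an appropriate weight, thereby reducing it to the already-established dual identity \eqref{Third_pair_dual}, and then to insert Warnaar's Bailey-type summation \eqref{T0_BL} on the right-hand side to close the loop. More precisely, note that the right-hand side of \eqref{Third_pair_dual} is $\sum_{j\geq 0} q^{(3j^2+2j)/2} T_0\left(\begin{array}{c}L\\ j\end{array};q^3\right)$, while the claimed right-hand side here is $\sum_{j=-\infty}^{\infty} q^{(3j^2+2j)/2}\,\T\left(\begin{array}{c}L,\, M\\ j,\, j\end{array};q^3\right)$. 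So the first step is to verify the $M\to\infty$ reduction: using \eqref{Binom_limit} and \eqref{Binom_limit2} on the $q$-binomials appearing in \eqref{T_Warnaar} (with $q$ replaced by $q^3$), one checks that $\T\left(\begin{array}{c}L,\, M\\ a,\, b\end{array};q^3\right)\to T_0\left(\begin{array}{c}L\\ a\end{array};q^3\right)$ as $M\to\infty$, which also serves as a sanity check that the statement is correctly normalized and that the symmetrization over negative $j$ on the right is harmless (the $j<0$ terms vanish in the limit because the relevant $q^3$-binomials become $1/(q^3;q^3)_\infty$ only for $j$ in a bounded window). This confirms the parenthetical remark and pins down the correct form of $\T$.

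For the finite identity itself I would argue as follows. Multiply \eqref{Third_pair_dual} by $q^{3m^2/2}\,\genfrac[]{0pt}{}{3M}{m}_q$—wait, more carefully: the structure of the right-hand side of Theorem~\ref{Outlook_THM1}, namely a convolution of a $q^{m^2/2}\genfrac[]{0pt}{}{3M}{m}_q$ factor against $\genfrac[]{0pt}{}{2M+(L-m)/2}{2M}_{q^3}$, is exactly what one obtains by applying the $q$-trinomial Bailey lemma \eqref{F_0_sum} in reverse, or rather by iterating \eqref{T0_BL}. So the cleaner route is: start from the known summation \eqref{T0_BL} (with $q\mapsto q^3$, and with $L$ there playing the role of an auxiliary bounding parameter), which expresses $\sum_i q^{3i^2/2}\genfrac[]{0pt}{}{L}{i}_{q^3} T_0\left(\begin{array}{c}i\\a\end{array};q^3\right)$ as a single $q^3$-binomial. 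Then I expect $\T\left(\begin{array}{c}L,M\\a,b\end{array};q^3\right)$ to be precisely the iterate of $T_0$ under \emph{two} such Bailey steps—one step with modulus $q^3$ producing the $\genfrac[]{0pt}{}{M}{n}_q$-type factor in \eqref{T_Warnaar} and a second producing one of the two $\genfrac[]{0pt}{}{M\pm b+\cdots}{M\pm b}_q$ factors—so that the doubly bounded version is obtained from \eqref{Third_pair_dual} by applying the Bailey machinery at the finite level rather than passing to the limit. Concretely: take \eqref{Third_pair_dual}, treat its left side $\sum_n q^{(L-2n)^2/2}(q^3;q^3)_L/((q;q)_{L-2n}(q^3;q^3)_n)$ and convolve both sides with $q^{3m^2/2}\genfrac[]{0pt}{}{3M}{m}_q\genfrac[]{0pt}{}{2M+(L-m)/2}{2M}_{q^3}$ summed over $m$ in the correct parity class and over the other length parameter; on the right side this convolution, after invoking \eqref{T0_BL} once in the $q^3$ variable and tracking the exponents, collapses exactly to $\sum_j q^{(3j^2+2j)/2}\T\left(\begin{array}{c}L,M\\j,j\end{array};q^3\right)$ by definition \eqref{T_Warnaar}, while on the left side the sum over $m$ telescopes via the $q$-binomial Vandermonde/Chu identity into $\sum_{L\equiv m}q^{m^2/2}\genfrac[]{0pt}{}{3M}{m}_q\genfrac[]{0pt}{}{2M+(L-m)/2}{2M}_{q^3}$.

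The step I expect to be the main obstacle is matching the exponents and the parity constraints on the left-hand side: the left side of Theorem~\ref{Outlook_THM1} has a single clean double sum, but the dual identity \eqref{Third_pair_dual} already carries a sum over $n$ (coming from the definition of the $q$-trinomial), so after convolving I will be left with a triple sum that must be reduced back to a double sum. The reduction should be a consequence of the $q$-Chu--Vandermonde identity (equivalently, the $q$-binomial theorem in terminating form, \eqref{qBin_EQN} with $a=q^{-N}$), applied to collapse the $(q;q)$-factors $1/((q;q)_{L-2n}(q^3;q^3)_n)$ against the $\genfrac[]{0pt}{}{3M}{m}_q$ weight; verifying that the three quadratic exponents $(L-2n)^2/2$, $3m^2/2$, and the cross terms in $\genfrac[]{0pt}{}{2M+(L-m)/2}{2M}_{q^3}$ recombine into the single $m^2/2$ on the left and $(3j^2+2j)/2$ on the right is a bookkeeping task but it is where any sign or shift error will hide. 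An alternative, perhaps safer, organization is to expand both sides in the variable $M$: show that both sides satisfy the same first-order $q$-difference equation in $M$ (peeling off $M\to M-1$ via the Pascal recurrence for the $q^3$-binomials on the right and a corresponding recurrence for $\T$, and matching initial condition at $M=0$, where the left side reduces to $\sum_{L\equiv m,\, m\geq 0} q^{m^2/2}\genfrac[]{0pt}{}{0}{m}_q\genfrac[]{0pt}{}{(L-m)/2}{0}_{q^3}=q^{L^2/2}$ if $L$ has the right parity and the right side reduces to a theta-like finite sum that one evaluates by \eqref{JTP} truncated). Either way, once the exponent bookkeeping is pinned down, the identity follows from results already in the paper—\eqref{Third_pair_dual}, \eqref{T0_BL}, and the $q$-Chu--Vandermonde summation—with no genuinely new input required.
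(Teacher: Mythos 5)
The paper does not actually prove Theorem~\ref{Outlook_THM1}: it is announced in the Outlook section and its proof is deferred to a forthcoming paper, so your proposal must stand on its own, and as written it does not. The central strategy is structurally backwards. You propose to obtain the doubly bounded identity, in which \emph{both} $L$ and $M$ are free parameters, from its $M\to\infty$ limit \eqref{Third_pair_dual}, which has only one free parameter, by convolving with an $M$-dependent kernel. Any such convolution $\sum_{L}K(M,L)(\cdot)$ applied to \eqref{Third_pair_dual} yields an identity with a single free parameter; it cannot recover the refinement, which carries strictly more information (the refinement implies \eqref{Third_pair_dual}, not conversely). Moreover the tool you invoke on the right-hand side, \eqref{T0_BL} (equivalently \eqref{F_0_sum}), converts $T_0$ into a single $q$-binomial; applied to \eqref{Third_pair_dual} it produces exactly \eqref{THM71}, as the paper already does in Section~\ref{Sec_Cap}, so it destroys rather than creates trinomial structure, and nothing in your argument explains how Warnaar's refined coefficient $\T$ and the second bound $M$ would ever appear. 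What is genuinely needed is a doubly bounded (Burge/Bailey-type) transform for the refined trinomials \eqref{T_Warnaar}, in the spirit of \cite{Warnaar_Refined,Warnaar_T} --- new input beyond \eqref{T0_BL} and $q$-Chu--Vandermonde, which you do not supply.

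Your fallback route (same first-order recurrence in $M$ plus matching initial condition) is a legitimate template, but you give neither the recurrence in $M$ satisfied by $\T\left(\begin{array}{c}L,\,M\\ j,\, j\end{array};q^3\right)$ nor its verification for the left-hand side, and your spot checks contain errors that would derail the bookkeeping. In fact $\lim_{M\to\infty}\T\left(\begin{array}{c}L,\,M\\ a,\, b\end{array};q^3\right)=T_0\left(\begin{array}{c}L\\ a\end{array};q^3\right)\big/(q^3;q^3)_L$, not $T_0$ itself (consistently, the left side of Theorem~\ref{Outlook_THM1} tends to the left side of \eqref{Third_pair_dual} divided by $(q^3;q^3)_L$); the negative-$j$ terms do \emph{not} vanish in that limit --- each tends to $q^{(3j^2+2j)/2}T_0\left(\begin{array}{c}L\\ j\end{array};q^3\right)/(q^3;q^3)_L$, nonzero for $|j|\le L$ --- so reconciling the bilateral $j$-sum with the sum range printed in \eqref{Third_pair_dual} needs the symmetry $T_0$ in $j\mapsto -j$ and genuine care, not the dismissal you give; and at $M=0$ the left-hand side equals $1$ for $L$ even and $0$ for $L$ odd, since ${0\brack m}_q=\delta_{m,0}$, not $q^{L^2/2}$, while the appeal to a ``truncated'' Jacobi triple product on the right is unfounded. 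In short, the proposal is a heuristic sketch whose main route cannot work; a correct proof must either run an induction on $M$ with explicit recurrences for both sides, or establish a refined-trinomial analogue of the Bailey-type summations, neither of which is carried out.
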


The refinement \eqref{T_Warnaar} of the $q$-Trinomial coefficients were first introduced by Warnaar \cite{Warnaar_Refined,Warnaar_T}. 

In the forthcoming paper, we will show that Theorem~\ref{Outlook_THM1} implies the following infinite hierarchy of identities.

\begin{theorem} Let $\nu$ be a positive integer, and let $N_k = n_k+n_{k+1}+\dots +n_{\nu}$, for $k=1,2,\dots, \nu$. Then,
\begin{align}
\nonumber\sum_{\substack{i,m,n_1,n_2,\dots,n_\nu\geq 0,\\ i+m \equiv N_1+N_2+\dots + N_\nu \text{ (mod 2)}}} &q^{\frac{m^2+3(i^2+N_1^2+N_2^2+\dots + N_\nu^2)}{2}} {L-N_1\brack i}_{q^3}{3n_\nu\brack m}_{q}\\
\label{Outlook_Big_EQN}&\hspace{-1cm}\times {2n_\nu + (i-N_1-N_2-\dots-N_\nu-m)/2\brack 2n_\nu}_{q^3} \prod_{j=1}^{\nu-1} {i- \sum_{k=1}^j N_k+n_j \brack n_j}_{q^3}\\
\nonumber&\hspace{-1cm}= \sum_{j=\infty} ^\infty q^{3{\nu+2\choose 2}j^2+j} \left(\begin{array}{c} L,\, (\nu+2)j\\ (\nu+2)j \end{array}; q^3 \right)_2.
\end{align}
\end{theorem}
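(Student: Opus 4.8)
The plan is to establish \eqref{Outlook_Big_EQN} by induction on $\nu\ge 1$, using Theorem~\ref{Outlook_THM1} as the engine: each increment of $\nu$ is realized by one ``layer-adding'' step --- a single $q$-binomial summation (via the $q$-binomial theorem \eqref{qBin_EQN}), applied simultaneously to both sides of the identity at level $\nu$. First note that \eqref{Outlook_Big_EQN} is a polynomial identity: the coefficient $\left(\begin{smallmatrix}L,\,(\nu+2)j\\ (\nu+2)j\end{smallmatrix};q^{3}\right)_{2}$ vanishes unless $|(\nu+2)j|\le L$, and on the left $i\le L$ forces every remaining summation index to be bounded, so all the (finite) interchanges of summation below are legitimate.

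For the base case $\nu=1$, replace $(L,M)$ in Theorem~\ref{Outlook_THM1} by $(i-n_{1},\,n_{1})$, multiply both sides by $q^{3(i^{2}+n_{1}^{2})/2}{L-n_{1}\brack i}_{q^{3}}$, and sum over $i,n_{1}\ge 0$. On the left this is immediate: the restriction $i-n_{1}\equiv m\pmod 2$ becomes $i+m\equiv n_{1}\pmod 2$, the Gaussian factors combine to $q^{(m^{2}+3i^{2}+3n_{1}^{2})/2}$, and --- with $N_{1}=n_{1}$ and the empty product $\prod_{j=1}^{0}$ --- the summand is exactly that of the $\nu=1$ case of the left side of \eqref{Outlook_Big_EQN}. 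On the right one is left with $\sum_{j}q^{(3j^{2}+2j)/2}\sum_{i,n\ge 0}q^{3(i^{2}+n^{2})/2}{L-n\brack i}_{q^{3}}\,\T\left(\begin{smallmatrix}i-n,\,n\\ j,\,j\end{smallmatrix};q^{3}\right)$, so the base case amounts to the reduction
\[\sum_{j}q^{\frac{3j^{2}+2j}{2}}\sum_{i,n\ge 0}q^{\frac{3(i^{2}+n^{2})}{2}}{L-n\brack i}_{q^{3}}\,\T\left(\begin{smallmatrix}i-n,\,n\\ j,\,j\end{smallmatrix};q^{3}\right)=\sum_{j}q^{9j^{2}+j}\left(\begin{smallmatrix}L,\,3j\\ 3j\end{smallmatrix};q^{3}\right)_{2},\qquad 9=3\binom{3}{2},\]
which I would prove by expanding $\T$ through its defining triple $q$-binomial sum \eqref{T_Warnaar}, interchanging summations, and carrying out the inner sums by the terminating $q$-Chu--Vandermonde identity together with the $q$-trinomial Bailey lemma \eqref{T0_BL} and its refined ($\T$-)version due to Warnaar \cite{Warnaar_Refined,Warnaar_T}.

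For the inductive step, suppose \eqref{Outlook_Big_EQN} holds at level $\nu$ for every $L$. Applying one more layer-adding summation --- a $q$-binomial-weighted sum over a new index $i$, together with the accompanying shift of $L$ and the relabeling that grows the nested $q$-binomial product by one factor and introduces $n_{\nu+1}$ at the innermost level --- to both sides: on the left, the bookkeeping $N_{k}^{(\nu+1)}=N_{k}^{(\nu)}+n_{\nu+1}$ turns the level-$\nu$ left side into the level-$(\nu+1)$ left side, the iterated Gaussian weights building up the quadratic form $N_{1}^{2}+\dots+N_{\nu}^{2}$; on the right, a level-raising identity of Bailey-chain type sums the $q$-binomial-weighted level-$(\nu+2)$ trinomial $\left(\begin{smallmatrix}i,\,(\nu+2)j\\ (\nu+2)j\end{smallmatrix};q^{3}\right)_{2}$ into the level-$(\nu+3)$ trinomial $\left(\begin{smallmatrix}L,\,(\nu+3)j\\ (\nu+3)j\end{smallmatrix};q^{3}\right)_{2}$, raising the trinomial base by one and, since $\binom{\nu+3}{2}-\binom{\nu+2}{2}=\nu+2$, carrying the exponent $3\binom{\nu+2}{2}j^{2}+j$ to $3\binom{\nu+3}{2}j^{2}+j$. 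This is precisely \eqref{Outlook_Big_EQN} at level $\nu+1$.

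The hard part will be isolating and proving the level-raising identity and keeping the exponent and parity bookkeeping consistent around it: one must verify that the base of $\left(\cdot\right)_{2}$ advances by exactly one per step, that the quadratic $q$-exponent accumulates precisely $3(\nu+2)j^{2}$ per step (so as to land on $3\binom{\nu+2}{2}j^{2}+j$), that the half-integral powers produced by \eqref{T_n_Def} and \eqref{qPoch_q_to_one_over_q} combine correctly, and that the $\pmod 2$ restrictions on $i,m,n_{1},\dots,n_{\nu}$ are carried through the $q$-Chu--Vandermonde reductions. A possibly cleaner alternative is to compare the $L$-generating functions of the two sides of \eqref{Outlook_Big_EQN}: by iterated use of \eqref{qBin_EQN} the left side should factor into an explicit infinite product, and by Lemma~\ref{Round_Tri_tri_var_GF_THM} together with the Jacobi triple product \eqref{JTP} --- after a theta/roots-of-unity manipulation accounting for the sublattice $(\nu+2)\Z$ and the Gaussian weight in $j$ --- the right side should factor into the same product; matching the two closed forms would prove \eqref{Outlook_Big_EQN} directly.
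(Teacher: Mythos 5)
A preliminary remark for calibration: the paper gives no proof of this theorem at all --- it is stated in the Outlook with the announcement that Theorem~\ref{Outlook_THM1} implies it, the derivation being deferred to a forthcoming paper --- so your proposal cannot be measured against an in-paper argument and must stand on its own. Its starting point is sound and is surely in the spirit of the authors' intended route: substituting $(L,M)\mapsto(i-n_1,n_1)$ in Theorem~\ref{Outlook_THM1}, multiplying by $q^{3(i^2+n_1^2)/2}{L-n_1\brack i}_{q^3}$ and summing does reproduce the $\nu=1$ left-hand side of \eqref{Outlook_Big_EQN}, parity condition included.

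However, everything that constitutes the actual content of the theorem is left as an assertion. First, your base case reduces to the claim that $\sum_j q^{(3j^2+2j)/2}\sum_{i,n\ge 0}q^{3(i^2+n^2)/2}{L-n\brack i}_{q^3}\,\T\left(\begin{smallmatrix}i-n,\,n\\ j,\,j\end{smallmatrix};q^3\right)$ collapses to $\sum_j q^{9j^2+j}\left(\begin{smallmatrix}L,\,3j\\ 3j\end{smallmatrix};q^3\right)_2$, which you attribute to $q$-Chu--Vandermonde plus \eqref{T0_BL} ``and its refined version''; none of the summations available in the paper (\eqref{T0_BL}--\eqref{T_min_1_BL}) does anything of this kind --- they turn $T$-trinomials into $q$-binomials and in particular never produce a round trinomial with the characteristic tripled to $3j$ --- so this is an unproven lemma, not a routine verification. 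Second, the inductive step rests entirely on an unspecified ``level-raising identity of Bailey-chain type'' carrying $\left(\begin{smallmatrix}i,\,(\nu+2)j\\ (\nu+2)j\end{smallmatrix};q^3\right)_2$ to $\left(\begin{smallmatrix}L,\,(\nu+3)j\\ (\nu+3)j\end{smallmatrix};q^3\right)_2$ with exponent gain $3(\nu+2)j^2$; no such formula is stated, proved, or cited, and it is precisely the theorem's difficulty. Moreover, the accompanying claim that the level-$\nu$ identity can simply be substituted inside the level-$(\nu+1)$ sum does not go through: writing the level-$(\nu+1)$ summand in level-$\nu$ form forces the inner index to be $\tilde i=i-N_1$, which is entangled with the new outer variables $n_1$ and with all the inner $n_k$'s, and the factors $q^{3i^2/2}{L-N_1\brack i}_{q^3}{i-\tilde N_1\brack n_1}_{q^3}$ do not reorganize into a fixed weight times the level-$\nu$ summand at some bound $\tilde L$; one is forced to argue on the trinomial side with coefficient sequences (a Bailey-pair-type argument), which is exactly the missing lemma. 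Finally, the suggested ``cleaner alternative'' would fail as described: Lemma~\ref{Round_Tri_tri_var_GF_THM} handles sums $\sum_j x^j(\cdot)_2$ with a geometric weight in $j$, whereas the right-hand side of \eqref{Outlook_Big_EQN} carries the Gaussian weight $q^{3{\nu+2\choose 2}j^2}$ on the sublattice $(\nu+2)\Z$, and for finite $L$ neither side's $t$-generating function is an infinite product (the triple product \eqref{JTP} only enters in the limit $L\to\infty$), so there are no two closed-form products to match. As it stands, the proposal is a plausible program, not a proof.
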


\section{Acknowledgement}

Authors would like to thank the Algorithmic Combinatorics group of the Research Institute for Symbolic Computation, lead by Peter Paule, for their hospitality and for providing them the research environment, where this work has flourished.

Research of the first author is partly supported by the Simons Foundation, Award ID: 308929. Research of the second author is supported by the Austrian Science Fund FWF, SFB50-07 and SFB50-09 Projects.

\end{document}